\newtheorem{thm}{Theorem}[section]
\newtheorem{prop}[thm]{Proposition}
\newtheorem{lem}[thm]{Lemma}
\theoremstyle{definition}
\newtheorem{example}[thm]{Example}
\newtheorem{remark}[thm]{Remark}
\newcommand{\cst}{\texttt{c{\!}s{\!}t}\,}
\renewcommand{\d}{\partial}
\newcommand{\eps}{\varepsilon}
\let\oddtheta\theta
\let\theta\vartheta
\let\vartheta\oddtheta
\let\oddphi\phi
\let\phi\varphi
\let\varphi\oddphi
\let\oddrho\rho
\let\rho\varrho
\let\varrho\oddrho
\newcommand{\B}{\mathbb B}
\newcommand{\N}{\mathbb N}
\newcommand{\Q}{\mathbb Q}
\newcommand{\R}{\mathbb R}
\renewcommand{\L}{\mathbb L}
\newcommand{\E}{\mathbb E} 
\renewcommand{\P}{\mathbb P} 
\newcommand{\cB}{\mathcal B}
\newcommand{\cD}{\mathcal D}
\newcommand{\cE}{\mathcal E}
\newcommand{\cF}{\mathcal F}
\newcommand{\cG}{\mathcal G}
\newcommand{\cI}{\mathcal I}
\newcommand{\cL}{\mathcal L}
\newcommand{\cN}{\mathcal N}
\newcommand{\cS}{\mathcal S}
\newcommand{\tE}{\mathtt E}
\newcommand{\abs}[1]{\left|#1\right|}
\newcommand{\norm}[1]{\left\|#1\right\|}
\newcommand{\inpro}[2]{\left\langle#1,#2\right\rangle}
\title{Local asymptotic normality for shape and periodicity of a signal in the drift of a degenerate diffusion with internal variables}
\author{Simon Holbach\footnote{Fakult{\"a}t f{\"u}r Mathematik, Universit{\"a}t Bielefeld, Postfach 10 01 31, 33501 Bielefeld, Germany, e-mail: sholbach$@$math.uni-bielefeld.de. This article features results from the author's PhD thesis \cite{ICHDiss} at Johannes Gutenberg-Universit{\"a}t Mainz.}}
\begin{document}
\maketitle
\begin{abstract}
Taking a multidimensional time-homogeneous dynamical system and adding a randomly perturbed time-dependent deterministic signal to some of its components gives rise to a high-dimensional system of stochastic differential equations which is driven by possibly very low-dimensional noise. Equations of this type are commonly used in biology for modeling neurons or in statistical mechanics for certain Hamiltonian systems. Assuming that the signal depends on an unknown shape parameter $\theta$ and also has an unknown periodicity $T$, we prove Local Asymptotic Normality (LAN) jointly in $\theta$ and $T$ for the statistical experiment arising from (partial) observation of this diffusion in continuous time. The local scale turns out to be $n^{-1/2}$ for $\theta$ and $n^{-3/2}$ for $T$ which generalizes known results for simpler systems.
\end{abstract}

\small{\textbf{Keywords:} local asymptotic normality, parametric signal estimation, degenerate diffusion, periodic drift

\textbf{AMS 2010 subject classification: }62F12, 60J60}

\section{Introduction of the model and the problem}\label{sect:intro}

Let $U\subset\R^{N+L}$ be a $\sigma$-compact set and let $f\colon U \to \R^N$ and $g\colon U \to \R^L$ be locally Lipschitz continuous functions. Finally, let $S\colon[0,\infty)\to\R^N$ be a continuous periodic signal and consider the deterministic dynamical system
\begin{align}\label{eq:ODE}
\begin{split}
dX_t &= f(X_t,Y_t)dt + S(t)dt, \\
dY_t &= g(X_t,Y_t)dt.
\end{split}
\end{align}
This system is divided into two groups of variables: The $N$ components of $X$ whose dynamics depend directly on the signal and the $L$ components of $Y$ which are affected by the signal only indirectly through the influence of $X$. Intuitively speaking, we can think of \eqref{eq:ODE} as a dynamical system with no intrinsic time-inhomogeneity which then receives an additional time-dependent \emph{external input} $S$ in some of its variables, while the remaining variables merely describe an interior mechanism. This is why we sometimes refer to $X$ as the \emph{adjustable variable(s)} and $Y$ as the \emph{internal variable(s)}. Note that the only source of time-inhomogeneity is indeed the signal -- if the system receives constant external input $S\equiv c \in \R^N$ (or none at all, i.e.\ $c=0$), it is homogeneous in time. Systems of this kind frequently arise in the context of neuroscience and statistical mechanics (see Examples \ref{ex:HH} and \ref{ex:rotors} below).

We construct a stochastic model by following the idea that the signal is not actually received in its original shape, but is subject to random perturbations by external noise (i.e.\ noise that is independent of the rest of the system). To take account of this notion, it seems natural to substitute the signal term $S(t)dt$ in \eqref{eq:ODE} with the increment $dZ_t$ of a process taking values in a closed set $U'\subset\R^N$ and satisfying an SDE of the type
\[
dZ_t = [S(t)+b(Z_t)]dt + \sigma(Z_t)dW_t,
\]
where $W$ is an $M$-dimensional standard Brownian Motion, while $b\colon U' \to \R^N$ and $\sigma\colon U' \to \R^{N\times M}$ are locally Lipschitz continuous drift and volatility functions. Note that this SDE can be viewed as a generalized Orstein-Uhlenbeck type process with time-dependent mean-reversion level (think of $b(Z_t)=-\beta Z_t$ with $\beta\in(0,\infty)$). A particularly prominent special case is the classical signal in noise model (take $M=N=1$, $b\equiv 0$, and $\sigma\equiv 1$, see for example \cite[Example I.7.3, Chapter III.5]{Ibra}), which arises in a wide variety of fields including communication, radiolocation, seismic signal processing, or computer-aided diagnosis and has been the subject of extensive study.

Perturbing $S(t)$ randomly in this way leads to the stochastic dynamical system
\begin{align}\label{eq:SDE}
	\begin{split}
		dX_t &= f(X_t,Y_t)dt + dZ_t, \\
		dY_t &= g(X_t,Y_t)dt, \\
		dZ_t &= [S(t)+b(Z_t)]dt + \sigma(Z_t)dW_t,
	\end{split}
\end{align}
with state space
\[
	\tE:=U \times U'\subset\R^{N+L+N}.
\]
This system can be thought of as degenerate in the following sense: Firstly, the equation for $Y$ does not incorporate the driving Brownian Motion $W$ explicitly, making it rather unclear which effect noise has on these components. Secondly, the dimension $M$ of the driving Brownian Motion can (and will usually) be much lower than the dimension $N+L+N$ of the system. This is why we call a stochastic process satisfying a system of stochastic differential equations of the type \eqref{eq:SDE} a \emph{degenerate diffusion with internal variables and randomly perturbed time-inhomogeneous deterministic input}.

We now have three groups of variables: The entirely autonomous external input governed by $dZ_t$ (the "noisy signal"), the components of $X$ that are directly adjusted by the noisy signal, and the components of the internal variable $Y$ whose dynamics are only indirectly affected by noise, since the respective differential equations incorporate neither $Z$ nor the driving Brownian Motion $W$ explicitly. Note that for this reason $Y$ is conditionally deterministic given $X$ and has continuously differentiable trajectories.

The system \eqref{eq:SDE} is a generalization of the one introduced in equation (18) of Section 4.1 of \cite{HLT2}, which is a probabilistic version of a class of dynamical systems that are well-known in the mathematical modeling of neurons (see Example \ref{ex:HH} below). In \cite{ICHHarris} (which can be viewed as a companion article to the present one), we study the model \eqref{eq:SDE} from a purely probabilistic standpoint and use methods from \cite{HLT3} to discuss sufficient conditions for the process $(X,Y,Z)$ to be positive Harris recurrent. Before we explain the focus of the current article, let us introduce two major examples.

\begin{example}\label{ex:HH}
	Let $N=1$, $L=3$, $U=\R\times[0,1]^3$ and consider the coefficient functions
	\[
		f(x,y)=-36y_1^4(x+12)-120y_2^3y_3(x-120)-0.3(x-10.6)
	\]
	and
	\[
	g(x,y)=\begin{pmatrix} \alpha_1(x)(1-y_1)-\beta_1(x)y_1 \\ \alpha_2(x)(1-y_2)-\beta_2(x)y_2 \\ \alpha_3(x)(1-y_3)-\beta_3(x)y_3	\end{pmatrix}
	\]
	with
	\[
	\begin{array}{llllll}
	\alpha_1(x) &=& \begin{cases} \frac{0.1-0.01x}{\exp(1-0.1x)-1}, & x\neq 10, \\ 0.1, & \text{else},\end{cases}& \beta_1(x)& = & 0.125\exp(-x/80),  \vspace{3mm} \\
	\alpha_2(x) &=& \begin{cases} \frac{2.5-0.1x}{\exp(2.5-0.1x)-1}, & x\neq 25, \\ 1, & \text{else},\end{cases}& \beta_2(x)& = & 4\exp(-x/18), \vspace{2.5mm} \\
	\alpha_3(x) &=& 0.07\exp(-x/20),& \beta_3(x)& = & \frac{1}{\exp(3-0.1x)+1}. \\
	\end{array}
	\]
	for all $(x,y)=(x,y_1,y_2,y_3)^\top\in U$. The corresponding dynamical system \eqref{eq:ODE} is known as the \emph{Hodgkin-Huxley system} and it was first introduced by Hodgkin and Huxley in 1952 (see \cite{HoHu}, note however that we use the slightly different model constants from \cite{Izhi}) with the aim of describing the initiation and propagation of action potentials in the cell membrane of a neuron in response to an external stimulus. While $X$ is the membrane potential itself (usually labeled $V$ in the literature), the internal variables $Y_1$, $Y_2$, and $Y_3$ (commonly denoted by $n$, $m$, and $h$) correspond to the ionic mechanism underlying its evolution. The two predominant ion currents in the cell membrane are import of sodium $Na^+$ and export of potassium $K^+$ through the membrane. Each of the internal variables signifies the probability that a specific type of gate in the respective ion channel is open at a given time. It is for this reason that $n$, $m$, and $h$ are often called gating variables. In the context of this model, the signal $S$ represents the dendritic input which the neuron receives from a large number of other neurons, transported by an even larger number of synapses located on the respective dendritic tree. The resulting "total dendritic input" can then be thought of as an average of interdependent and repeating similar currents, which is why $S$ is usually assumed to be periodic (or even constant). When modeling neurons, particular interest lies in the typical spiking behaviour of the membrane potential, a feature that is commonly agreed upon to be adequately described by the Hodgkin-Huxley model. For a more detailed modern introduction, interpretation, and an in-depth comparison with other neuron models, see for example \cite{Izhi} and \cite{Dest}.
	
	Adding noise in the sense of \eqref{eq:SDE} by choosing	$\sigma\in C^\infty(U')$ and $b(Z_t)=- \beta Z_t$ with $\beta\in(0,\infty)$, we acquire the so-called \emph{stochastic Hodgkin-Huxley model (with mean reverting Ornstein-Uhlenbeck type input)}. It was first introduced and studied by H{\"o}pfner, L{\"o}cherbach, and Thieullen in the series of the three papers \cite{HLT1}, \cite{HLT2}, and \cite{HLT3}. The constant $\beta$ is determined by the so-called time constant of the membrane which represents spontaneous voltage decay not related to the input. For many types of neurons, the time constant is known from experiments (see \cite{Ditlevsen}). A degree of freedom lies in the choice of the volatility $\sigma$ which reflects the nature of the influence of noise. In the past, mean reverting Ornstein-Uhlenbeck type equations with various volatilities have been used to model the membrane potential itself (see for example \cite{Lansky} or \cite{HoBio}), and in a sense our stochastic Hodgkin-Huxley model can be viewed as a refinement of this kind of model. If $\sigma$ is Lipschitz continuous, existence of a unique non-exploding strong solution taking values in $\tE=\R\times [0,1]^3\times U'$ follows from the same arguments as in \cite[Proposition 1]{HLT1} and \cite[Proposition 2]{HLT2}.
	
	Analogously, one can introduce stochastic versions of simpler neuron models such as the FitzHugh-Nagumo model (see \cite[equations (4.11) and (4.12)]{Izhi}) or the Morris-Lecar model (see \cite{Morris} or, for a modern version, \cite{Rinzel}).
\end{example}

\begin{example}\label{ex:rotors}
	Systems of coupled oscillators are particularly intuitive Hamiltonian systems and several different stochastic models have been subject to research in the past (see e.g\ \cite{Hairer}, \cite{Cuneo2}, \cite{Rey-Bellet}, \cite{Rotoren2018}). The following example is inspired by the model from \cite{Cuneo} to which we add a time-inhomogeneity and the corresponding external variables.
	
	Let us think of three rotors, each given by their angle $q_i(t)\in\R$ and momentum $p_i(t)\in\R$ at the time $t\in[0,\infty)$ for each $i\in\{1,2,3\}$. Assuming their respective masses to be all equal to $1$ and not taking into account units, the laws of classical mechanics imply
	\begin{equation}\label{eq:rotors0}
	\dot q_i = p_i \quad \text{for all $i\in\{1,2,3\}$.}
	\end{equation}
	We suppose that these rotors are coupled in row, i.e.\
	\begin{align}\label{eq:rotors}
		\begin{split}
			\dot p_1 &= w_1(q_2-q_1)-u_1(q_1), \\
			\dot p_2 &= -[w_1(q_2-q_1) + w_3(q_2-q_3)]-u_2(q_2), \\
			\dot p_3 &= w_3(q_2-q_3)-u_3(q_3),
		\end{split}
	\end{align}
	where $w_1,w_2,w_3\colon\R\to\R$ and $u_1,u_2,u_3\colon\R\to\R$ are related to interaction potentials and pinning potentials, respectively. A classical model is the one that arises if we let one or both of the outer rotors receive external torques and interact with Langevin type heat baths. In order to give a mathematical description of this, we fix	$i\in\{1,3\}$ for the remainder of this paragraph. Applying an external time-dependent torque $S_i\colon[0,\infty)\to\R$ to the $i$-th rotor means expanding the equation for $p_i$ to
	\[
	dp_i = \left[w_i(q_2-q_i)-u_i(q_i)\right]dt +S_i dt,
	\]
	which turns \eqref{eq:rotors0} and \eqref{eq:rotors} into a system like \eqref{eq:ODE}. On top of that, we want to add interaction with a heat bath, i.e.\ for a temperature $\tau_i\in(0,\infty)$ and a dissipation constant $\delta_i\in(0,\infty)$, the equation for $p_i$ is further expanded to
	\begin{align*}
		dp_i &= \left[w_i(q_2-q_i)-u_i(q_i)\right]dt + S_idt - \delta_i p_i dt + \sqrt{2\delta_i\tau_i}dW^{(i)}_t \\
			&=\left[w_i(q_2-q_i)-u_i(q_i)-\delta_i p_i\right]dt  + \left[S_i dt+ \sqrt{2\delta_i\tau_i}dW^{(i)}_t\right],
	\end{align*}
	where the last term in parentheses is the total sum of external influences. Following the spirit of \eqref{eq:SDE}, we may replace this term with the increments of a more general random perturbation of the torque: We take
	\[
	dp_i =\left[w_i(q_2-q_i)-u_i(q_i)-\delta_i p_i\right]dt  + dZ^{(i)}_t
	\]
	with
	\[
	dZ^{(i)}_t=\left[S_i(t)+b_i(Z^{(i)}_t)\right]dt + \sigma_i(Z^{(i)}_t)dW^{(i)}_t
	\]
	for some volatility $\sigma_i\colon\R\to\R$ and a drift $b_i\colon\R\to\R$. What we end up with is indeed a degenerate diffusion with internal variables and randomly perturbed time-inhomogeneous deterministic input as in \eqref{eq:SDE}. If only the first rotor in the chain receives an external input, the dimensions are $M=N=1$ and $L=5$, $U=\R^6$, $U'=\R$. If both of the outer rotors receive an external input, the dimensions are $M=N=2$ and $L=4$, $U=\R^6$, $U'=\R^2$.
\end{example}

In this article, we want to study a statistical model in which the deterministic signal $S$ depends on a set of parameters. More precisely, we assume that there is an open set $\Theta\subset\R^D$ such that
\[
S=S_{(\theta,T)} \quad \text{with $(\theta,T)\in\Theta\times(0,\infty)$},
\]
where $T$ is the signal's periodicity and $\theta$ is a $D$-dimensional shape parameter. A natural goal is to estimate $\theta$ and $T$ simultaneously from continuous observation of the process. However, observing the process $(X,Y,Z)$ entirely may not make sense in many models: The external variable $Z$ can be of a rather abstract nature and, for example, in the Hodgkin-Huxley model from Example \ref{ex:HH} the only variable that is arguably observable is the membrane potential $X$. In spite of that, Section \ref{sect:observe} shows:
\begin{framed}
	\noindent\textbf{Result 1.} As long as the initial configuration $(X_0,Y_0,Z_0)$ is deterministic and known, it does not matter whether we can observe the entire process $(X,Y,Z)$, only the adjustable variable $X$, or only the external variable $Z$.
\end{framed}
\noindent This is the content of Remark \ref{rem:observe} and Proposition \ref{prop:likelihoods}. Since $Z$ is the most convenient process to handle statistically among all of these, our considerations in the sequel are confined to this external variable. Being able to relate statistical problems entirely to $Z$ means that as long as this variable fits our setting, we can treat any example of \eqref{eq:SDE} (including in particular those that were introduced in Examples \ref{ex:HH} and \ref{ex:rotors}). In Section \ref{sect:LAN}, we prove an LAN result for the external variable (Theorem \ref{thm:LAN}), generalizing \cite[Theorem 2.3]{ICH} in which we only treated the case $M=N=1$. This can then be combined with the previous results in order to obtain:
\begin{framed}
	\noindent\textbf{Result 2.} Under reasonable regularity conditions on the parametrization and under some non-degeneracy and ergodicity of the external variable $Z$, the sequence of statistical experiments corresponding to continuous observation of $(X,Y,Z)$ over growing time intervals $[0,n]$ for $n\to\infty$ has the LAN property. The local scales are identified as $n^{-1/2}$ for the shape and $n^{-3/2}$ for the periodicity.
\end{framed}
\noindent The rigorous and precise corresponding statement is Theorem \ref{thm:LANX}. It allows for application to simultaneous estimation of shape and periodicity, as under LAN we can use H\'{a}jek's Convolution Theorem and the Local Asymptotic Minimax Theorem in order to establish optimality for estimators when the rescaled estimation errors are stochastically asymptotically equivalent to the central statistic of the experiment (see \cite{LeCam}, \cite {Davies}, \cite{Kut} or \cite{HoBo} for a detailed presentation of the relevant theory).

\section{Main results and applications}\label{sect:results}

First, let us recall and collect the basic assumptions that were mentioned in the introduction.
\begin{itemize}
	\item[\textbf{(A0)}] \textbf{Basic setting:} The state space is $\tE=U\times U'$ where $U\subset\R^{N+L}$ is $\sigma$-compact and $U'\subset\R^N$ is closed. All of the coefficient functions $f$, $g$, $b$, $\sigma$ are locally Lipschitz continuous and the signal $S_{(\theta,T)}$ is continuous, $T$-periodic with $T\in(0,\infty)$ and depends on some parameter $\theta$ taken from an open set $\Theta\subset\R^D$. 
\end{itemize}
Throughout this article, (A0) will be a tacit standing assumption.

Using the notation $\Phi_t=(X_t,Y_t,Z_t)$ for all $t\in[0,\infty)$ and incorporating the parameters, we rewrite the equation \eqref{eq:SDE} as
\begin{equation}\label{eq:SDEpar}
d\Phi_t=B_{(\theta,T)}(t,\Phi_t)dt+\Sigma(\Phi_t)dW_t,
\end{equation}
where
\[
B_{(\theta,T)}\colon [0,\infty)\times \tE \to \R^{N+L+N}, \quad (t,x,y,z)\mapsto \begin{pmatrix}
f(x,y) +S_{(\theta,T)}(t)+b(z)\\
g(x,y) \\
S_{(\theta,T)}(t)+b(z)
\end{pmatrix},
\]
for each $(\theta,T)\in\Theta\times(0,\infty)$, while
\[
\Sigma \colon \tE \to \R^{(N+L+N)\times M}, \quad (x,y,z)\mapsto \begin{pmatrix}
\sigma(z)\\
0_{L\times M} \\
\sigma(z)
\end{pmatrix}.
\]
We fix some probability space $(\Omega,\cF,\P)$ and we consider the following assumptions about the SDE \eqref{eq:SDEpar}:
\begin{itemize}
	\item[\textbf{(A1)}] \textbf{Unique solvability:} For all $(\theta,T)\in\Theta\times(0,\infty)$ and all deterministic starting points $\Phi_0 \in \tE$, the SDE \eqref{eq:SDEpar} has a unique strong solution $\Phi^{(\theta,T)}=\big(X^{(\theta,T)},Y^{(\theta,T)},Z^{(\theta,T)}\big)\colon[0,\infty)\to\tE$ under $\P$.
	\item[\textbf{(A2)}]  \textbf{Bounded diffusion matrix:} The mapping $\sigma\sigma^\top\colon U'\to\R^{N\times N}$ is uniformly bounded away from $0$ and from $\infty$ in the sense that there are $\sigma_0,\sigma_\infty\in(0,\infty)$ such that
	\[
	\sigma_0 \abs{x}^2 \le x^\top \left(\sigma\sigma^\top(z)\right) x \le \sigma_\infty \abs{x}^2 \quad \text{for all $x \in \R^N$ and $z\in U'$}.
	\]
	\item[\textbf{(A3)}] \textbf{Transition densities for the external variable:} For all $(\theta,T)\in\Theta\times(0,\infty)$ and $t\ge s\ge0$, there is a measurable function
	$p^{(\theta,T)}_{s,t}\colon U' \times U'\to[0,\infty)$ such that
	\[
	\P\left(Z^{(\theta,T)}_t\in B\,\middle|\, Z^{(\theta,T)}_s=z\right)=\int_B p^{(\theta,T)}_{s,t}(z,w)dw \quad \text{for all $z\in U'$ and measurable sets $B\subset U'$.}
	\]
	\item[\textbf{(A4)}] \textbf{Periodic recurrence of the external variable:} For all $(\theta,T)\in\Theta\times(0,\infty)$ the grid chain $\big(Z^{(\theta,T)}_{nT}\big)_{n\in\N_0}$ is positive Harris recurrent.
\end{itemize}

\begin{remark} 1.) As we know from Linear Algebra, (A2) also yields that the inverse $\left(\sigma\sigma^\top(z)\right)^{-1}$ exists for all $z \in U'$, is symmetric and positive definite (and hence possesses a square root $\left(\sigma\sigma^\top(z)\right)^{-1/2}\in\R^{N\times N}$), and we have 
	\begin{equation}\label{eq:ellipticupperbound}
	\sigma_\infty^{-1} \abs{x}^2 \le x^\top \left(\sigma\sigma^\top(z)\right)^{-1} x \le \sigma_0^{-1} \abs{x}^2 \quad \text{for all $x \in \R^N$}.
	\end{equation} 
	
	2.) Note that $\sigma^\top\left(\sigma\sigma^\top\right)^{-1}(z) \in \R^{M\times N}$ is a right inverse of $\sigma(z)$. Thus, the linear mapping $\sigma(z)\colon \R^M\to\R^N$ is surjective and hence $M\ge N$. In this sense, (A2) is a non-degeneracy condition on the external equation for $Z$. It is also "almost sufficient" for (A3) (it is sufficient e.g. in the case that $b$ and $\sigma$ are smooth with bounded derivatives of any order, compare \cite{HairerMalli}).
	
	3.) Together with (A3), the recurrence assumption (A4) allows us to make use of certain variants of classical Limit Theorems (see \cite{HK2}, \cite{HK3}) which we will need for Lemma \ref{lem:LAN} below. Note that (A4) is weaker than the assertion that the entire process $\Phi^{(\theta,T)}$ is positive Harris-recurrent (compare \cite{ICHHarris}).
\end{remark}

Let $(\theta,T)\in\Theta\times(0,\infty)$. We define the probability measure
\[
\P^{(\theta,T)}:=\cL\left([0,\infty)\ni t\mapsto \Phi^{(\theta,T)}_t\,\middle| \,\P\,\right)
\]
on $\cB\big(C([0,\infty);\tE)\big)$ such that for the canonical process $\pi=(\pi_t)_{t\in[0,\infty)}$ on $C([0,\infty);\tE)$ we have
\[
\cL\left(\pi\,\middle|\,\P^{(\theta,T)}\right)=\cL\left(\Phi^{(\theta,T)}\,\middle|\,\P\right).
\]
Observing the process continuously then means working with the filtration given by
\[
\cF_t:=\bigcap_{r\in(t,\infty)} \sigma(\pi_s \,|\,  s\in[0,r]) \subset \cB\big(C([0,\infty);\tE)\big) \quad \text{for all $t\in[0,\infty)$}
\]
and gives rise to the sequence of statistical experiments defined by
\[
\cE_{(X,Y,Z)}:=\left(C([0,\infty);\tE), \cF_n, \left\{ \P^{(\theta,T)}|_{\cF_n} \, \middle| \, (\theta,T) \in \Theta\times(0,\infty) \right\}\right)_{n \in \N}.
\]
As is proved in Section \ref{sect:proofs}, for all $(\tilde\theta,\tilde T)\in\Theta\times(0,\infty)$ the corresponding log-likelihood ratios are given by
\begin{align}\label{eq:loglikelihoodX}
\begin{split}
\log\frac{d\P^{(\tilde\theta,\tilde T)}|_{\cF_t}}{d\P^{(\theta,T)}|_{\cF_t}}
=&\int_0^t \left((\sigma\sigma^\top(\pi^Z_s))^{-1/2}\big(S_{(\tilde\theta,\tilde T)}-S_{(\theta,T)}\big)(s)\right)^\top dB^{(\theta,T)}_s \\ 
&- \frac{1}{2}\int_0^t \big(S_{(\tilde\theta,\tilde T)}-S_{(\theta,T)}\big)^\top(s) \left(\sigma\sigma^\top(\pi^Z_s)\right)^{-1} \big(S_{(\tilde\theta,\tilde T)}-S_{(\theta,T)}\big)(s)ds,
\end{split}
\end{align}
where $B^{(\theta,T)}$ is a Brownian Motion and $\pi^Z=(\pi^{(N+L+1)},\ldots,\pi^{(N+L+N)})$. Examining its structure suggests that in order to find a suitable quadratic expansion for LAN we have to impose appropriate smoothness conditions on the signal with respect to the parameters. The following set of conditions (S1) - (S5) turns out to be sufficient:
\begin{enumerate}
	\item[\textbf{(S1)}] \textbf{Basic regularity:} For each $\theta \in \Theta$ we have a 1-periodic function 
	\[
	S_\theta=\begin{pmatrix}S_\theta^{(1)} \\ \vdots \\ S_\theta^{(N)}\end{pmatrix} \in C^2\big([0,\infty);\R^N\big)
	\]
	such that
	\[
	S_\cdot(s) \in C^1\big(\Theta;\R^N\big) \quad \text{for every $s \in [0,\infty)$}
	\]
	and
	\[
	\d_{\theta_i} S_\theta(\cdot) \in \L^2_{\mathrm{loc}}\big([0,\infty);\R^N\big) \quad \text{for every $\theta\in\Theta$ and $i\in\{1,\ldots,D\}$.}
	\]
	\item[\textbf{(S2)}] \textbf{$\L^2_{\mathrm{loc}}$-differentiability with respect to $(\theta,T)$:} The mapping
	\begin{align*}
	S \colon \Theta \times (0,\infty) &\to \L^2_{\mathrm{loc}}\big([0,\infty);\R^N\big), \\
	(\theta,T) \quad&\mapsto S_{(\theta,T)}:=S_\theta\left(\frac{\cdot}{T}\right),
	\end{align*}
	is $\L^2_{\mathrm{loc}}$-differentiable with the derivative
	\begin{align*}
	\dot{S} \colon \Theta \times (0,\infty) &\to \L^2_{\mathrm{loc}}\big([0,\infty);\R^{N\times(D+1)}\big), \\ (\theta,T) \quad &\mapsto \dot{S}_{(\theta,T)}:=\begin{pmatrix}\d_{\theta_1}S^{(1)}_{(\theta,T)} & \cdots & \d_{\theta_D}S^{(1)}_{(\theta,T)} & \d_T S^{(1)}_{(\theta,T)} \\ \vdots&\ddots&\vdots&\vdots \\\d_{\theta_1}S^{(N)}_{(\theta,T)} & \cdots & \d_{\theta_D}S^{(N)}_{(\theta,T)} & \d_T S^{(N)}_{(\theta,T)} \end{pmatrix},
	\end{align*}
	in the sense that for every $t\in(0,\infty)$ and $(\theta,T) \in \Theta \times (0,\infty)$ we have\footnote{In the context of vector operations, we often write $(\theta,T)$ instead of the formally correct but awkward $(\theta^\top, T)^\top$.}
	\[
	\int_0^t \abs{\frac{S_{(\tilde\theta,\tilde T)}(s)-S_{(\theta,T)}(s)- \dot{S}_{(\theta,T)}(s)\big((\tilde\theta,\tilde T)-(\theta,T)\big)}{\abs{(\tilde\theta,\tilde T)-(\theta, T)}}}^2\!\!ds \to 0, \text{ as } (\tilde\theta,\tilde T) \to (\theta,T).
	\]
	\item[\textbf{(S3)}] \textbf{$\L^2_{\mathrm{loc}}$-continuity of the $(\theta,T)$-derivative:} The mapping $\dot{S}$ is $\L^2_{\mathrm{loc}}$-continuous in the sense that for all $t\in(0,\infty)$ and $(\theta,T) \in \Theta \times (0,\infty)$ we have
	\[
	\int_0^t \abs{\dot{S}_{(\tilde\theta,\tilde T)}(s)-\dot{S}_{(\theta,T)}(s)}^2ds \to 0, \text{ as } (\tilde\theta,\tilde T) \to (\theta,T),
	\]
	where the notation $\abs{\,\cdot\,}$ is used for the Frobenius norm of a matrix.
	\item[\textbf{(S4)}] \textbf{$\L^2_{\mathrm{loc}}$-H{\"o}lder condition with respect to $T$ for the $\theta$-derivative:} For any fixed $\theta\in\Theta$ the mapping
	\begin{align*}
	(0,\infty) \ni T \mapsto D_\theta S_{(\theta,T)}:=\begin{pmatrix}\d_{\theta_1}S^{(1)}_{(\theta,T)} & \cdots & \d_{\theta_D}S^{(1)}_{(\theta,T)} \\ \vdots&\ddots&\vdots \\\d_{\theta_1}S^{(N)}_{(\theta,T)} & \cdots & \d_{\theta_D}S^{(N)}_{(\theta,T)} \end{pmatrix} \in \L^2_{\mathrm{loc}}\big([0,\infty);\R^{N\times D}\big)
	\end{align*}
	satisfies the following local H{\"o}lder condition: For each $T\in(0,\infty)$ there are
	\[
	\alpha \in (0,2] \quad \text{and} \quad \beta\in[0,1+3\alpha/2)
	\]
	such that for suitable $\eps>0$ and $t_0\in[0,\infty)$ we have
	\[
	\int_{t_0}^t \abs{D_\theta S_{(\theta,\tilde T)}(s) - D_\theta S_{(\theta,T)}(s)}^2 ds \le Ct^\beta\abs{\tilde T-T}^\alpha
	\]
	for all $t>t_0$, $\tilde T \in (T-\eps,T+\eps)$, and for some constant $C\in(0,\infty)$ that does not depend on $\tilde T$ or $t$.
	\item[\textbf{(S5)}] \textbf{Linearly independent derivatives:} For all $\theta\in\Theta$, the functions $\d_{\theta_1}S_\theta, \ldots, \d_{\theta_D}S_\theta, S'_\theta$ are linearly independent.
\end{enumerate}

\begin{remark}\label{rem:signal}
	1.) If (S1) holds and $\dot S_{(\theta,T)}(s)$ is continuous (and thus also locally bounded) with respect to $\theta$, $T$, and $s$, (S2) and (S3) follow by dominated convergence. Note that in general, (S1) does not require that for example $\d_{\theta_1}S_{(\theta,T)}(s)$ is continuous (or even locally bounded) in $T$ or $s$.
	
	2.) Suppose that (S1) holds and that for every $\theta\in\Theta$ and $t\in(0,\infty)$ there are $\delta=\delta(\theta) \in (0,1]$ and $C(\theta,t) \le \cst t^\zeta$ with $\zeta \in [0,\delta/2)$ such that the mapping
	\[
	[0,\infty) \ni s \mapsto D_\theta S_\theta(s):= \begin{pmatrix}\d_{\theta_1}S^{(1)}_\theta(s) & \cdots & \d_{\theta_D}S^{(1)}_\theta(s) \\ \vdots&\ddots&\vdots \\\d_{\theta_1}S^{(N)}_\theta(s) & \cdots & \d_{\theta_D}S^{(N)}_\theta(s) \\ \end{pmatrix} \in \R^{N\times d}
	\]
	is H{\"o}lder-$\delta$-continuous on $[0,t]$ with H{\"o}lder-constant $C(\theta,t)$. If $T\in(0,\infty)$, we get that for sufficiently small $\eps>0$ and for all $\tilde T \in (T-\eps,T+\eps)$
	\begin{align*}
		\int_0^{t} \abs{D_\theta S_{(\theta,\tilde T)}(s)-D_\theta S_{(\theta,T)}(s)}^2ds & = \int_0^{t} \abs{D_\theta S_\theta\left(\frac{s}{\tilde T}\right)-D_\theta S_\theta\left(\frac{s}{T}\right)}^2ds \\
		&\le \sup_{T'\in(T-\eps,T+\eps)}C\left(\theta,\frac{t}{T'}\right)^2 \int_0^{t} \abs{\frac{s}{\tilde T}-\frac{s}{T}}^{2\delta}ds \\
		& \le \cst \left(\frac{t}{T-\eps}\right)^{2\zeta} \left(\frac{\abs{\tilde T-T}}{(T-\eps)^2}\right)^{2\delta} \int_0^ts^{2\delta}ds \\
		& \le \cst t^{2\zeta+2\delta+1}\abs{\tilde T-T}^{2\delta}.	
	\end{align*}
	Setting $\alpha:=2\delta$, we can choose
	\[
	\beta:=2(\delta+\zeta)+1 < 2\left(\delta+\frac{\delta}{2}\right)+1= 1+3\alpha/2,
	\]
	and hence the H{\"o}lder condition (S4) is fulfilled.
	
	3.) As a consequence of the two preceding observations, all of the hypotheses (S1) - (S4) are fulfilled if the mapping $\Theta\times[0,\infty) \ni (\theta,s) \mapsto S_\theta(s)$ is in $C^2_b\big(\Theta\times[0,\infty);\R^N\big)$ and $1$-periodic with respect to $s$. Existence and boundedness of $\d_sD_\theta S_\theta(s)$ ensure that we can choose $\delta=1$ and $\zeta=0$ above.
	
	4.) Note that the choice of the matrix norm in (S3) and (S4) is of course arbitrary. We decided to go with the Frobenius norm, because it is commonly used and it is convenient to handle in our calculations.
\end{remark}

The main result is the following one. For a detailed explanation and proof, as well as an explicit introduction of the Fisher Information, we refer to Section \ref{sect:proofs}.

\begin{thm}[Local Asymptotic Normality for $\cE_{(X,Y,Z)}$]\label{thm:LANX}
	Grant all of the hypotheses (A1) - (A4) and (S1) - (S5) and fix $(\theta,T) \in \Theta\times(0,\infty)$. Set
	\[
	\delta_n := \begin{pmatrix}
	n^{-1/2} & 0 & \cdots & 0 \\
	0 & \ddots &\ddots & \vdots \\
	\vdots &\ddots&n^{-1/2}&0 \\
	0 &\cdots&0& n^{-3/2} \\
	\end{pmatrix}
	\in \R^{(D+1)\times(D+1)} \quad \text{for all $n\in\N$,} 
	\]
	and fix any bounded sequence $(h_n)_{n \in \N} \subset \R^{D+1}$. Then $\P^{(\theta,T)}$-almost surely we have
	\begin{equation}
	\log\frac{d\P^{(\theta,T)+\delta_n h_n}|_{\cF_n}}{d\P^{(\theta,T)}|_{\cF_n}}=h_n^\top \cS^{(\theta,T)}_n-\frac12 h_n^\top \cI_{(\theta,T)}h_n + o_{\P^{(\theta,T)}}(1), \quad \text{as $n\to\infty$,}
	\end{equation}
	with Fisher Information $\cI_{(\theta,T)}=\cI_{(\theta,T)}(1)$ as introduced in \eqref{eq:matrix} and score
	\begin{equation*}
		\cS^{(\theta,T)}_n=\delta_n \int_0^n \left((\sigma\sigma^\top)^{-1/2}(\pi^Z_s) \dot{S}_{(\theta,T)}(s)\right)^\top d B^{(\theta,T)}_s \quad \text{for all $n\in\N$} 
	\end{equation*}
	such that weak convergence
	\begin{equation*}
		\cL\left(\cS^{(\theta,T)}_n\middle|\P^{(\theta,T)}\right) \xrightarrow{n\to\infty} \cN\left(0,\cI_{(\theta,T)}\right)
	\end{equation*}
	holds.
\end{thm}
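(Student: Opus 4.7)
The plan is to extract the LAN expansion directly from the explicit log-likelihood representation \eqref{eq:loglikelihoodX}, evaluated at $(\tilde\theta,\tilde T)=(\theta,T)+\delta_n h_n$, by a quadratic expansion of the signal in the parameters. As a preliminary step I would invoke Result~1, i.e.\ Remark~\ref{rem:observe} together with Proposition~\ref{prop:likelihoods}, to equate the likelihood ratios of $\cE_{(X,Y,Z)}$ with those of the experiment based on observing $Z$ alone. All subsequent work then bears on the two integrals in \eqref{eq:loglikelihoodX}.

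I would then use the mean-value identity (legitimate under (S1)--(S2))
\[
S_{(\theta,T)+\delta_n h_n}-S_{(\theta,T)} = \dot S_{(\theta,T)}\,\delta_n h_n + R_n,
\]
with $R_n(s):=\int_0^1\!\bigl[\dot S_{(\theta,T)+r\delta_n h_n}-\dot S_{(\theta,T)}\bigr](s)\,dr\cdot\delta_n h_n$, and substitute into \eqref{eq:loglikelihoodX}. The principal part reproduces $h_n^\top\cS_n^{(\theta,T)}-\tfrac12 h_n^\top\cI_n h_n$ with
\[
\cI_n := \delta_n\int_0^n \dot S_{(\theta,T)}(s)^\top (\sigma\sigma^\top)^{-1}(\pi^Z_s)\dot S_{(\theta,T)}(s)\,ds\,\delta_n,
\]
while all remaining summands are cross or pure remainder terms involving $R_n$.

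For $\cI_n\to\cI_{(\theta,T)}$ in $\P^{(\theta,T)}$-probability I would appeal to the periodic-ergodic limit theorems of \cite{HK2,HK3}, whose hypotheses are exactly (A3) and (A4). The peculiar anisotropy of $\delta_n$ has a clean source: $\dot S_{(\theta,T)}(s)$ carries the $T$-derivative $\partial_T S_{(\theta,T)}(s)=-\tfrac{s}{T^{2}}S'_\theta(s/T)$, whose polynomial prefactor $s$ makes the un-rescaled $(T,\theta)$- and $(T,T)$-blocks of $\int_0^n \dot S^\top(\sigma\sigma^\top)^{-1}\dot S\,ds$ grow like $n^2$ and $n^3$ respectively, and this is precisely compensated by $\delta_n$. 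After the change of variable $u=s/n$, each block becomes a Riemann sum with weight $1$, $u$, or $u^2$ times the appropriate periodic mean of $(\sigma\sigma^\top)^{-1}$ paired with the derivatives of $S_\theta$ against the invariant measure of the $T$-grid chain, producing the Fisher Information matrix of Section~\ref{sect:proofs}. Asymptotic Gaussianity of $\cS_n^{(\theta,T)}$ with covariance $\cI_{(\theta,T)}$ then follows from the martingale CLT, since its $\P^{(\theta,T)}$-predictable bracket is exactly $\cI_n$.

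The technical heart will be showing that every summand involving $R_n$ is $o_{\P^{(\theta,T)}}(1)$. After Cauchy--Schwarz, the task reduces to bounding $\int_0^n |\dot S_{(\theta,T)+r\delta_n h_n}(s)-\dot S_{(\theta,T)}(s)|^2\,ds$ multiplied by a quadratic factor from $\delta_n h_n$, and it is cleanest to split the perturbation into its $\theta$- and its $T$-component. Perturbations of $\theta$ are harmless: combining (S3) with the 1-periodicity of $S_\theta$ from (S1) gives $o(n)$, which is $o(1)$ after the $|\delta_n h_n^{(1:D)}|^2=O(n^{-1})$ factor. The dangerous case is perturbing $T$ in the $\theta$-derivatives, where (S4) is tailored: it yields $\int_0^n |D_\theta S_{(\theta,\tilde T)}-D_\theta S_{(\theta,T)}|^2\,ds\le C n^{\beta}|\tilde T-T|^{\alpha}=O(n^{\beta-3\alpha/2})$, which after the $O(n^{-1})$ weight is $O(n^{\beta-3\alpha/2-1})=o(1)$ precisely because (S4) enforces $\beta<1+3\alpha/2$. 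The $\partial_T S$-row is treated analogously, its polynomial prefactor $s$ providing extra factors that are absorbed by $|\delta_n h_n^{(D+1)}|^2=O(n^{-3})$ thanks to the $C^2$-regularity from (S1). Non-degeneracy of $\cI_{(\theta,T)}$ then follows from the linear independence (S5), and the sharp compatibility between the Hölder exponents in (S4) and the anisotropic scale $\delta_n$ is what I expect to be the main obstacle.
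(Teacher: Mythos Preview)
Your proposal is correct and follows essentially the same route as the paper: reduce to $\cE_Z$ via Proposition~\ref{prop:likelihoods}, expand the log-likelihood by adding and subtracting $\dot S_{(\theta,T)}\delta_n h_n$, get $\cI_n\to\cI_{(\theta,T)}$ from the periodic-ergodic limit theorems of \cite{HK2,HK3} (the paper packages this as Lemma~\ref{lem:LAN} applied with $k=0,1,2$), deduce the Gaussian limit of the score via a martingale CLT, and kill the remainders using periodicity with (S3), the H{\"o}lder condition (S4), and the $C^2$-regularity from (S1). The only organizational difference is that you write the remainder via the integral mean-value formula for $\dot S$, whereas the paper telescopes $S_{(\theta_n,T_n)}-S_{(\theta,T)}-\dot S_{(\theta,T)}\delta_n h_n$ directly into three pieces $A_n+B_n+C_n$ (Taylor in $\theta$ at $T_n$, H{\"o}lder in $T$ for $D_\theta S$, Taylor with Lagrange remainder in $T$ for $S$); both decompositions lead to the same estimates under the same hypotheses.
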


\begin{proof}[Proof of Theorem \ref{thm:LANX}]
	The claim follows immediately from Theorem \ref{thm:LAN} and (the proof of) Proposition \ref{prop:likelihoods}. In particular, the assumptions (A2) and (S5) can in fact be replaced by the slightly weaker but more technical conditions (A2') and (S5') which are introduced in Section \ref{sect:proofs} below and are discussed in Remark \ref{rem:Fischer}.
\end{proof}

Note that other than the basic existence and uniqueness assumption (A1), the conditions for Theorem \ref{thm:LANX} incorporate only the external variable and the deterministic signal. Before we proceed to the proof section, we would like to collect some comments on relevant examples in which these conditions are fulfilled.

\begin{example}\label{ex:OU}
A simple yet important example for the external variable is the multidimensional Ornstein-Uhlenbeck process with time-dependent mean reversion level $S_{(\theta,T)}$. This process corresponds to \eqref{eq:ext} with $b(z)=-\beta z$ for all $z\in U'=\R^N$ with some positive definite $\beta\in\R^{N\times N}$ and a constant volatility $\sigma\in\R^{N\times M}$ such that $\sigma\sigma^\top\in\R^{N\times N}$ is positive definite. Assumption (A2) is then trivially fulfilled, and in complete analogy to the case $M=N=1$ (see \cite[Example 2.3]{HK2}), one can calculate explicitly its transition densities, yielding (A3). These can then be used to apply Theorem 3.2 and Theorem 4.6 (with $f\equiv 1$ and $V(z)=\abs{z}^2$) from \cite{MeynTweedie} in order to check (A4).
\end{example}

\begin{example}\label{ex:signal0}
	1.) Let $S_\theta(s)=F(\theta,\phi(s))$, where $\phi \in C^2\big([0,\infty);\R^K\big)$ is $1$-periodic and 
	\[
	F \colon \Theta\times\R^K \ni (\theta,\xi)=(\theta_1,\ldots,\theta_D,\xi_1,\ldots,\xi_K) \mapsto F(\theta,\xi)=\begin{pmatrix}F_1(\theta,\xi) \\ \vdots \\ F_N(\theta,\xi) \end{pmatrix} \in \R^N 
	\] 
	is continuously differentiable with respect to $\theta\in\Theta$ and twice continuously differentiable with respect to $\xi\in\R^K$. Clearly, the property (S1) holds, and since $\dot S_{(\theta,T)}(s)$ is given by
	\[
	\begin{pmatrix}(\d_{\theta_1} F_1)(\theta,\phi(\frac{s}{T})) & \cdots & (\d_{\theta_D} F_1)(\theta,\phi(\frac{s}{T})) & -sT^{-2} (\nabla_\xi F_1)(\theta,\phi(\frac{s}{T}))^\top \phi'(\frac{s}{T})\\ \vdots&\ddots&\vdots&\vdots \\ (\d_{\theta_1} F_N)(\theta,\phi(\frac{s}{T})) & \cdots & (\d_{\theta_D} F_N)(\theta,\phi(\frac{s}{T})) & -sT^{-2} (\nabla_\xi F_N)(\theta,\phi(\frac{s}{T}))^\top \phi'(\frac{s}{T})  \end{pmatrix}
	\]
	which is continuous with respect to $\theta$, $T$, and $s$, we also have (S2) and (S3). Moreover, we see that the H{\"o}lder property from part 2.)\ of Remark \ref{rem:signal} is fulfilled if it is fulfilled by the mapping
	\[
	\R^K \ni \xi \mapsto \begin{pmatrix}(\d_{\theta_1} F_1)(\theta,\xi) & \cdots & (\d_{\theta_D} F_1)(\theta,\xi)\\ \vdots&\ddots&\vdots \\ (\d_{\theta_1} F_N)(\theta,\xi) & \cdots & (\d_{\theta_D} F_N)(\theta,\xi) \end{pmatrix}.
	\]
	In that case, all of the hypotheses (S1) - (S4) hold.
	
	2.) If the signal has a product structure $S_\theta(s)=D(\theta)\phi(s)$ with $\phi \in C^2\big([0,\infty);\R^K\big)$ $1$-periodic and $G \in C^1\big(\Theta;\R^{N\times K}\big)$, we can treat it as a special case of the preceding example. As for all $s, \tilde s \in [0,\infty)$ we have
	\begin{align*}
	\abs{D_\theta S_\theta(s)-D_\theta S_\theta(\tilde s)}^2 =& \sum_{n=1}^N\sum_{d=1}^D \left(\sum_{k=1}^K (\d_{\theta_d} G_{n,k})(\theta) \big(\phi_k(s)-\phi_k(\tilde s)\big) \right)^2 \\ 
	\le & \left(\sum_{n=1}^N\sum_{d=1}^D \sum_{k=1}^K (\d_{\theta_d} G_{n,k})^2(\theta)\right) \abs{\phi(s)-\phi(\tilde s)}^2 \\
	\le &\left(\sum_{n=1}^N\sum_{d=1}^D \sum_{k=1}^K (\d_{\theta_d} G_{n,k})^2(\theta)\right) \norm{\phi'}_\infty^2 \abs{s-\tilde s}^2 ,
	\end{align*}
	no further conditions are needed to ensure the H{\"o}lder property from part 2.)\ of Remark \ref{rem:signal} to hold with $\delta=1$ and $\zeta=0$.
	
	3.) In particular, the example above secures that (S1) - (S4) are fulfilled for signals of the form
	\begin{equation}\label{eq:signalsin}
	S_\theta(s)=\sum_{k=1}^K \big( \sin(2k\pi s) G_k(\theta)+ \cos(2k\pi s)H_k(\theta) \big) \quad \text{for all $s\in[0,\infty)$}
	\end{equation}
	with $K \in \N$ and $G_k,H_k \in C^1\big(\Theta;\R^N\big)$ for all $k \in\{1,\ldots,K\}$.
	
	4.) Taking $K=D$, $N=1$ and $G_k(\theta)=\theta_k$, $H_k(\theta)=0$ for all $\theta\in\Theta$ and $k \in\{1,\ldots,K\}$, the signal from \eqref{eq:signalsin} clearly also satisfies (S5), als long as $0\notin\Theta$.
\end{example}

\section{Proofs and supplementary results}\label{sect:proofs}

\subsection{Observing $(X,Y,Z)$, $X$, or $Z$}\label{sect:observe}

We start this section with a fundamental observation: If the starting point is known, observing only the adjustable variable $X$ is actually no restriction, since we can successively reconstruct the remaining variables $Y$ and $Z$. Let us explain this step for step in the following remark.

\begin{remark}\label{rem:observe}
	Assume that the starting point $(X_0,Y_0,Z_0)\in\tE$ is known. Fix a finite time horizon $t_0\in(0,\infty)$ and assume that the trajectory $(X_t)_{t\in[0,t_0]}$ has been observed and is thus also known. Then the function $(t,y) \mapsto g(X_t,y)$ is completely known, and given the structure of the internal equation in \eqref{eq:SDE}, the trajectory $(Y_t)_{t\in[0,t_0]}$ is now given as the solution to the ordinary differential equation
	\begin{align*}
		dY_t=g(X_t,Y_t)dt \quad \text{for all $t\in[0,t_0]$.}
	\end{align*}
	Now we know both $(X_t)_{t\in[0,t_0]}$ and $(Y_t)_{t\in[0,t_0]}$, and by rearranging the first line of \eqref{eq:SDE}, this information allows us to calculate
	\[
	Z_t=Z_0+X_t-X_0-\int_0^t f(X_s,Y_s)ds \quad \text{for all $t\in[0,t_0]$.}
	\]
	All in all, we have reconstructed every component of $(X_t,Y_t,Z_t)_{t\in[0,t_0]}$ just from $(X_t)_{t\in[0,t_0]}$ and the starting point $(X_0,Y_0,Z_0)$.
\end{remark}

Remark \ref{rem:observe} is the legitimation for us to work with the idealized assumption that we can in fact observe the entire process $(X,Y,Z)$ even in situations where realistically one could only observe the adjustable variable $X$. Next, we will describe the corresponding statistical experiment.

In order to make Proposition \ref{prop:likelihoods} more apprehensible, we will do this very carefully and with much attention to measure-theoretic subtleties. A look at \eqref{eq:SDEpar} reveals that the drift coefficient depends on the parameter $(\theta,T)\in\Theta\times(0,\infty)$, while the volatility does not. Hence, we can use \cite[Theorem 6.10]{HoBo}\footnote{Note that we do not assume -- as in this Theorem -- that $B$ and $\Sigma$ are defined on the entire euclidean space and are globally Lipschitz continuous. By our assumptions, $\tE=U\times U'$ is $\sigma$-compact and hence we can find a sequence $(K_n)_{n\in\N}$ of compact sets increasing to $\tE$. Using Kirszbraun's Theorem (\cite[Hauptsatz I]{Kirsz}), the restriction to each $K_n$ of $B(t,\cdot)$ and $\Sigma$ can be extended to globally Lipschitz continuous functions on $\R^{N+L+N}$ (which also satisfy a linear growth condition). Hence, the proof of \cite[Theorem 6.10]{HoBo} needs only a slight adjustment to work in our case: Using the notation from there, the stopping time $\rho_n$ has to be replaced by $\rho_n \wedge \inf\{t>0\,|\, \eta_t \notin K_n\}$ and in equation ($\mathrm{II}^{(n)}$) and thereafter the coefficients $b$, $\sigma$ and $c$ have to be altered in analogy to $\gamma$. The rest of the proof then needs no further changes.} in order to determine the log-likelihood ratios. Let $(t,x,y,z)\in[0,\infty)\times\tE$. Comparing the drift coefficients of \eqref{eq:SDEpar} with different parameters $(\tilde\theta,\tilde T),(\theta,T)\in\Theta\times(0,\infty)$, we see that
\begin{align*}
	\big(B_{(\tilde\theta,\tilde T)}-B_{(\theta,T)}\big)(t,x,y,z)= \Sigma\Sigma^\top(x,y,z) \Gamma(t,x,y,z),
\end{align*}
where
\[
\Gamma(t,x,y,z):=\begin{pmatrix}  0 \\ \big(\sigma\sigma^\top\big)^{-1}(z)\big(S_{(\tilde\theta,\tilde T)}-S_{(\theta,T)}\big)(t) \end{pmatrix} \in \R^{N+L+N}.
\]
Thanks to (A2) and \eqref{eq:ellipticupperbound}, 
\begin{equation}\label{eq:A2ref}
	\int_0^t \big(\Gamma^\top \Sigma\Sigma^\top \Gamma\big)(s,\pi_s) ds 
	\le \sigma_0^{-1}\int_0^t \abs{S_{(\tilde\theta,\tilde T)}(s)-S_{(\theta,T)}(s)}^2ds <\infty,
\end{equation}
because the signals are continuous. Thence, both conditions (+) and (++) of \cite[Theorem 6.10]{HoBo} are fulfilled. Writing $m^{\Phi,(\theta,T)}$ for the local martingale part of $\pi$ under $\P^{(\theta,T)}$, we can conclude that
\[
\log\frac{d\P^{(\tilde\theta,\tilde T)}|_{\cF_t}}{d\P^{(\theta,T)}|_{\cF_t}}
=\int_0^t \Gamma(s,\pi_s)^\top dm^{\Phi,(\theta,T)}_s - \frac{1}{2}\int_0^t \left(\Gamma^\top \Sigma\Sigma^\top \Gamma\right)(s,\pi_s) ds.
\]
Setting $\pi^Z:=\left(\pi^{(N+L+1)},\ldots,\pi^{(N+L+N)}\right)^\top$ and writing $m^{Z,(\theta,T)}$ for its local martingale part under $\P^{(\theta,T)}$, the expression for the log-likelihood ratio can be rewritten as
\begin{align*}
	\int_0^t \Big( \left(\sigma\sigma^\top(\pi^Z_s)\right)^{-1} & \big(S_{(\tilde\theta,\tilde T)}-S_{(\theta,T)}\big)(s) \Big)^\top dm^{Z,(\theta,T)}_s \\
	&- \frac{1}{2}\int_0^t \big(S_{(\tilde\theta,\tilde T)}-S_{(\theta,T)}\big)^\top(s) \left(\sigma\sigma^\top(\pi^Z_s)\right)^{-1} \big(S_{(\tilde\theta,\tilde T)}-S_{(\theta,T)}\big)(s)ds.
\end{align*}
In order to eliminate the rather unintuitive integral with respect to $m^{Z,(\theta,T)}$, we introduce the local $\big(\P^{(\theta,T)},(\cF_t)_{t\in[0,\infty)}\big)$-martingale $B^{(\theta,T)}:=\big(B^{(\theta,T)}_t\big)_{t\in[0,\infty)}$ given by
\begin{equation}\label{eq:BMX}
B^{(\theta,T)}_t=\int_0^t (\sigma\sigma^\top)^{-1/2}(\pi^Z_s) dm^{Z,(\theta,T)}_s \quad \text{for all $t\in[0,\infty)$.}
\end{equation}
Its quadratic variation process is
\begin{align*}
	\left\langle \int_0^\cdot (\sigma\sigma^\top)^{-1/2}(\pi^Z_s) dm^{Z,(\theta,T)}_s \right\rangle_t
	= \int_0^t (\sigma\sigma^\top)^{-1}(\pi^Z_s) d \left(\int_0^s\sigma\sigma^\top(\pi^Z_r)dr\right)
	= t\cdot 1_{N\times N}
\end{align*}
for all $t\in[0,\infty)$, so L{\'e}vy's Characterization Theorem \cite[Theorem II.6.1]{Ikeda} yields that $B^{(\theta,T)}$ is an $N$-dimensional $\big(\P^{(\theta,T)},(\cF_t)_{t\in[0,\infty)}\big)$-Brownian Motion. Incorporating this process, we can write
\begin{align}\label{eq:loglikelihoodX}
	\begin{split}
		\log\frac{d\P^{(\tilde\theta,\tilde T)}|_{\cF_t}}{d\P^{(\theta,T)}|_{\cF_t}}
		=&\int_0^t \left((\sigma\sigma^\top(\pi^Z_s))^{-1/2}\big(S_{(\tilde\theta,\tilde T)}-S_{(\theta,T)}\big)(s)\right)^\top dB^{(\theta,T)}_s \\ 
		&- \frac{1}{2}\int_0^t \big(S_{(\tilde\theta,\tilde T)}-S_{(\theta,T)}\big)^\top(s) \left(\sigma\sigma^\top(\pi^Z_s)\right)^{-1} \big(S_{(\tilde\theta,\tilde T)}-S_{(\theta,T)}\big)(s)ds.
	\end{split}
\end{align}
We note immediately that the only component of $\pi$ that is featured explicitly in this expression is the $\pi^Z$-component. It seems plausible that we should get the same expression for the log-likelihood ratio in an experiment that does not even know that any variables other than $Z$ exist. Let us make this formally rigorous.

Let $\eta=(\eta_t)_{t\in[0,\infty)}$ be the canonical process on $C\big([0,\infty);U'\big)$, and write
\[
\Q^{(\theta,T)}:=\cL\left([0,\infty)\ni t\mapsto Z^{(\theta,T)}_t\,\middle| \,\P\,\right)
\]
for the law on $\cB\big(C\big([0,\infty);U')\big)$ of the unique strong solution $Z^{(\theta,T)}$ on $(\Omega,\cF)$ under $\P$ of
\begin{equation}\label{eq:ext}
dZ_t = [S_{(\theta,T)}(t)+b(Z_t)]dt + \sigma(Z_t)dW_t,
\end{equation}
when issued from $Z_0\in\R^N$ with the parameter $(\theta,T)\in \Theta\times(0,\infty)$. For any $t\in[0,\infty)$ let
\[
\cG_t:=\bigcap_{r\in(t,\infty)} \sigma(\eta_s \,|\, s\in[0,r]) \subset \cB\big(C\big([0,\infty);U'\big)\big)
\]
and consider the sequence of experiments given by
\begin{equation}\label{eq:cE_Z}
\cE_Z:=\left(C\big([0,\infty);U'\big), \cG_n, \left\{ \Q^{(\theta,T)}|_{\cG_n} \, \middle| \, (\theta,T) \in \Theta\times(0,\infty) \right\}\right)_{n \in \N}.
\end{equation}
Using the same arguments as above and writing $\tilde m^{Z,(\theta,T)}$ for the local martingale part of $\eta$ under $\Q^{(\theta,T)}$, we can again use \cite[Theorem 6.10]{HoBo} and conclude
\begin{align}\label{eq:loglikelihoodZ}
	\begin{split}
		\log\frac{d\Q^{(\tilde\theta,\tilde T)}|_{\cG_t}}{d\Q^{(\theta,T)}|_{\cG_t}}
		=&\int_0^t \left((\sigma\sigma^\top)^{-1/2}(\eta_s)\big(S_{(\tilde\theta,\tilde T)}-S_{(\theta,T)}\big)(s)\right)^\top d\tilde B^{(\theta,T)}_s \\ 
		&- \frac{1}{2}\int_0^t \big(S_{(\tilde\theta,\tilde T)}-S_{(\theta,T)}\big)^\top(s) \left(\sigma\sigma^\top(\eta_s)\right)^{-1} \big(S_{(\tilde\theta,\tilde T)}-S_{(\theta,T)}\big)(s)ds,
	\end{split}
\end{align}
where the process $\tilde B^{(\theta,T)}:=\big(\tilde B^{(\theta,T)}_t\big)_{t\in[0,\infty)}$ given by
\begin{equation}\label{eq:BMZ}
\tilde B^{(\theta,T)}_t=\int_0^t (\sigma\sigma^\top)^{-1/2}(\eta_s) d\tilde m^{Z,(\theta,T)}_s \quad \text{for all $t\in[0,\infty)$}
\end{equation}
is again an $N$-dimensional $\big(\Q^{(\theta,T)},(\cG_t)_{t\in[0,\infty)}\big)$-Brownian Motion.

We now have calculated the log-likelihood ratios for both $\cE_{(X,Y,Z)}$ and $\cE_Z$. Comparing them leads to the following result. 

\begin{prop}\label{prop:likelihoods}
	Grant assumptions (A1) and (A2). The sequences $\cE_{(X,Y,Z)}$ and $\cE_Z$ corresponding to continuous observation of $(X,Y,Z)$ or $Z$ respectively, with the same deterministic starting point $(X_0,Y_0,Z_0)\in\tE$, are statistically equivalent in the sense that
	\begin{equation}\label{eq:likelihoods}
	\cL\left( \left(\log\frac{d\Q^{(\tilde\theta,\tilde T)}|_{\cG_t}}{d\Q^{(\theta,T)}|_{\cG_t}}\right)_{t\in[0,\infty)} \;\middle|\; \Q^{(\theta,T)} \right) = \cL\left( \left(\log\frac{d\P^{(\tilde\theta,\tilde T)}|_{\cF_t}}{d\P^{(\theta,T)}|_{\cF_t}}\right)_{t\in[0,\infty)} \;\middle|\; \P^{(\theta,T)} \right)
	\end{equation}
	for all $(\theta,T), (\tilde\theta,\tilde T)\in\Theta\times(0,\infty)$. In particular, we have LAN for $\cE_{(X,Y,Z)}$ if and only if we have it for $\cE_Z$ with the same local scale, the same Fisher Information and an identically distributed Score.
\end{prop}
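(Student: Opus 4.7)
The plan is to observe that both representations \eqref{eq:loglikelihoodX} and \eqref{eq:loglikelihoodZ} are one and the same deterministic Borel functional applied to the pair formed by the canonical $Z$-component of the relevant process and the companion Brownian motion built from it via \eqref{eq:BMX}/\eqref{eq:BMZ}. Once I show that these two pairs share the same joint law under $\P^{(\theta,T)}$ and $\Q^{(\theta,T)}$ respectively, the identity \eqref{eq:likelihoods} follows by pushing both sides through that functional. The LAN claim then comes for free, because the local scale $\delta_n$, the Fisher information $\cI_{(\theta,T)}$, and the law of the score $\cS^{(\theta,T)}_n$ are all recoverable from the finite-dimensional distributions of the log-likelihood process.

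For the marginal identity I would argue as follows. By (A1) the SDE \eqref{eq:SDEpar} admits a unique strong solution $\Phi^{(\theta,T)}=(X^{(\theta,T)},Y^{(\theta,T)},Z^{(\theta,T)})$ started from $(X_0,Y_0,Z_0)$ and driven by $W$. Reading only its last $N$ coordinates, $Z^{(\theta,T)}$ solves the autonomous external equation \eqref{eq:ext} with the same driver and the same initial condition $Z_0$. Strong uniqueness for \eqref{eq:ext}, a consequence of the local Lipschitz continuity of $b$ and $\sigma$ together with the non-explosion already provided by (A1), forces this $Z$-component to be indistinguishable from the stand-alone strong solution underlying $\Q^{(\theta,T)}$. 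Hence $\cL(\pi^Z\mid\P^{(\theta,T)})=\cL(\eta\mid\Q^{(\theta,T)})$.

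To upgrade this to equality of joint laws with the auxiliary Brownian motions, I note that $B^{(\theta,T)}$ is produced from $\pi^Z$ by a parameter-dependent but otherwise universal measurable functional. Indeed, the local martingale part can be recovered pathwise as
\[
m^{Z,(\theta,T)}_t=\pi^Z_t-\pi^Z_0-\int_0^t\bigl[S_{(\theta,T)}(s)+b(\pi^Z_s)\bigr]\,ds,
\]
and the stochastic integral in \eqref{eq:BMX}, whose integrand is continuous and bounded by (A2), can be realized as a probability limit of Riemann sums along a fixed refining sequence of partitions, hence as a Borel functional of the path of $\pi^Z$. Precisely the same functional applied to $\eta$ gives $\tilde B^{(\theta,T)}$ from \eqref{eq:BMZ}. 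Combining with the preceding marginal identity, we obtain $\cL(\pi^Z,B^{(\theta,T)}\mid\P^{(\theta,T)})=\cL(\eta,\tilde B^{(\theta,T)}\mid\Q^{(\theta,T)})$, and pushing both sides jointly in $t$ through the deterministic functional exhibited in \eqref{eq:loglikelihoodX}/\eqref{eq:loglikelihoodZ} yields \eqref{eq:likelihoods}. Since the LAN expansion, the local scale, the Fisher information, and the score are measurable functionals of the log-likelihood process itself, the final assertion of the proposition follows at once.

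The only genuinely delicate point is the pathwise/measurable realization of the stochastic integral $\int_0^\cdot(\sigma\sigma^\top)^{-1/2}(\pi^Z_s)\,dm^{Z,(\theta,T)}_s$: one must argue that it depends only on the trajectory of $\pi^Z$ and not on the ambient filtered probability space. Under (A2) the integrand is bounded and continuous along continuous paths, so localization together with in-probability convergence of Riemann sums along refining dyadic partitions gives the required representation, and the remainder is bookkeeping.
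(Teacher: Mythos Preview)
Your proposal is correct and follows essentially the same route as the paper's own proof: establish $\cL(\pi^Z\mid\P^{(\theta,T)})=\cL(\eta\mid\Q^{(\theta,T)})$ and then observe that both log-likelihood processes are the same measurable functional of the $Z$-path (via \eqref{eq:BMX}/\eqref{eq:BMZ} and \eqref{eq:loglikelihoodX}/\eqref{eq:loglikelihoodZ}). The paper dispatches the marginal identity in one line ``due to the definition'' (implicitly using that the $Z$-equation in \eqref{eq:SDE} is autonomous, so the $Z$-component of $\Phi^{(\theta,T)}$ and the stand-alone solution of \eqref{eq:ext} coincide), whereas you spell out the strong-uniqueness argument and the pathwise realization of the stochastic integral; these are exactly the details the paper suppresses.
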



\begin{proof}
	Due to the definition of $\P^{(\theta,T)}$ and $\Q^{(\theta,T)}$, we have
	\[
	\cL\big(\eta\,\big|\,\Q^{(\theta,T)}\big)=\cL\big(\pi^Z\,\big|\,\P^{(\theta,T)}\big),
	\]
	and in view of \eqref{eq:BMX}, \eqref{eq:loglikelihoodX}, \eqref{eq:loglikelihoodZ}, and \eqref{eq:BMZ}, this implies \eqref{eq:likelihoods} from which the second statement of this Proposition follows immediately.
\end{proof}

In view of Theorem \ref{thm:LANX}, Proposition \ref{prop:likelihoods} is the justification for us to restrict ourselves to studying the simpler process $Z$ instead of the more complex $(X,Y,Z)$ in the following section.

\subsection{Local Asymptotic Normality for $Z$}\label{sect:LAN}

This section centres around the sequence of statistical experiments defined by $\cE_Z$ in \eqref{eq:cE_Z} which corresponds to continuous observation over growing time intervals of the $N$-dimensional diffusion $Z$ following the parameter-dependent SDE \eqref{eq:ext}. As mentioned in Section \ref{sect:intro}, taking $M=N=1$, $b\equiv0$, and $\sigma\equiv1$ leads to the classical "signal in white noise" model.  For this special case, Ibragimov and Khasminskii proved LAN with rate $n^{-3/2}$ for a smooth signal with known $\theta$ and unknown $T$, and discussed asymptotic efficiency for certain estimators (see \cite[Sections II.7 and III.5]{Ibra}). In \cite{Golubev}, Golubev extended their approach with $\L^2$-methods in order to estimate $T$ at the same rate for unknown shape which in turn was the basis for Castillo, L\'{e}vy-Leduc and Matias for non-parametric estimation of the shape under unknown $T$ (see \cite{CLM}). For our more general diffusion \eqref{eq:ext}, we will stay within the confines of parametric estimation. The main result of this section is LAN for the sequence of experiments $\cE_Z$ with unknown $\theta$ and unknown $T$ (Theorem \ref{thm:LAN}). For $M=N=1$ H{\"o}pfner and Kutoyants had already solved this problem both for known $T$ with unknown $\theta$ (see \cite{HK1}) and for known $\theta$ with unknown $T$ (see \cite{HK3}). A result on LAN jointly in $\theta$ and $T$ was presented in \cite{ICH}, but still only in dimension one. Theorem \ref{thm:LAN} extends all of these results and allows for application to simultaneous estimation of the shape and the periodicity in any dimension.

In the context of this subsection, we replace the assumption (A1) with the following weaker analogue. 
\begin{itemize}
	\item[\textbf{(A1')}] \textbf{Unique solvability:} For all $(\theta,T)\in\Theta\times(0,\infty)$ and all deterministic starting points $Z_0\in U'$, the SDE \eqref{eq:ext} has a unique strong solution $Z^{(\theta,T)}\colon[0,\infty)\to U'$ under $\P$.
\end{itemize}
We also work with the following slight relaxation of (A2).
\begin{itemize}
	\item[\textbf{(A2')}]  \textbf{Uniform ellipticity:} The mapping $\sigma\sigma^\top\colon U'\to\R^{N\times N}$ is uniformly elliptic, i.e.\ there is some $\sigma_0\in(0,\infty)$ such that
	\[
	x^\top \left(\sigma\sigma^\top(z)\right) x \ge \sigma_0 \abs{x}^2 \quad \text{for all $x \in \R^N$ and $z\in U'$}.
	\]
\end{itemize}
Note that so far, the only use of (A2) occured in \eqref{eq:A2ref}, and there (A2') would also suffice. Let us also give an equivalent reformulation of (A4) which incorporates the notation we introduced in the previous section.
\begin{itemize}
	\item[\textbf{(A4)}] \textbf{Periodic recurrence of \eqref{eq:ext}:} For all $(\theta,T)\in\Theta\times(0,\infty)$ the grid chain $\left(\eta_{kT}\right)_{k\in\N_0}$ under $\Q^{(\theta,T)}$ is positive Harris recurrent with invariant probability measure $\mu^{(\theta,T)}$.
\end{itemize}
Periodicity of the signal is the reason why (A4) even makes sense at all: Since $S_{(\theta,T)}$ and therefore the entire drift term of \eqref{eq:ext} is $T$-periodic, the grid chain is a $U'$-valued time-homogeneous discrete-time Markov process. Another important process that is embedded in $\eta$ in a similar way is the $C([0,T];U')$-valued time-homogeneous \emph{path segment chain} $\eta^{\mathbf{ps}}:=\left(\eta^{\mathbf{ps}}_k\right)_{k\in\N_0}$ defined by taking an arbitrary $\eta^{\mathbf{ps}}_0\in C\big([0,T];U'\big)$ with $\eta^{\mathbf{ps}}_0(T)=Z_0$ and then setting
\[
	\eta^{\mathbf{ps}}_k:=\left([0,T]\ni t \mapsto \eta_{(k-1)T+t}\right) \quad \text{for all $k\in\N$.}
\]
As we know from \cite[Theorem 2.1 (a)]{HK2}\footnote{Note that even though this Theorem is only explicitly stated for $\R$-valued processes, the authors remark at the beginning of the section that it remains valid for any polish state space, in particular for the cloed set $U'\subset\R^N$.}, the path segment chain $\eta^{\mathbf{ps}}$ inherits positive Harris recurrence under $\Q^{(\theta,T)}$ from the grid chain and its invariant distribution $m^{(\theta,T)}$ is the unique measure on $\cB\big(C([0,T];U')\big)$ such that for all $l\in\N$, $0=t_0<t_1<\ldots<t_l=T$, and $B_0,\ldots,B_l\in\cB\big(U'\big)$ we have
\begin{align}\label{eq:segmentinvariantmeasure}
	\begin{split}
		m^{(\theta,T)}( \eta_{t_i} \in B_i &\text{ for all $i \in \{0,\ldots,l\}$} ) 
		= \int_{B_0} \mu^{(\theta,T)}(dx_0)\int_{B_1}Q^{(\theta,T)}_{t_0,t_1}(x_0,dx_1)\ldots \int_{B_l}Q^{(\theta,T)}_{t_{l-1},t_l}(x_{l-1},dx_l),
	\end{split}
\end{align}
where $\big(Q^{(\theta,T)}_{s,t}\big)_{t> s\ge 0}$ is the transition semi-group of $\eta$ under $\Q^{(\theta,T)}$.

We will make use of the following strong law of large numbers for the path segment chain which we cite from \cite[Theorem 2.1 (b)]{HK2}.

\begin{prop}\label{prop:SLLN}
	Let (A1'), (A3) and (A4) hold and fix some $(\theta,T)\in\Theta\times(0,\infty)$. Assume that $\left(A_t\right)_{t\in[0,\infty)}$ is a $\big(\Q^{(\theta,T)},(\cG_t)_{t\in[0,\infty)}\big)$-increasing process. If there is a non-negative function $F \in \L^1\big(m^{(\theta,T)}\big)$ such that
	\[
	A_{kT}=\sum_{j=1}^k F\left(\eta^{\mathbf{ps}}_j\right) \quad \text{$\Q^{(\theta,T)}$-almost surely for all $k \in \N$,}
	\]
	then
	\[
	\frac{1}{t}A_t \xrightarrow{t \to \infty} \frac{1}{T} \int_{C([0,T];U')}F(\phi) m^{(\theta,T)}(d\phi) \quad \text{$\Q^{(\theta,T)}$-almost surely.}
	\]
\end{prop}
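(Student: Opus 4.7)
The plan is to reduce the statement to the discrete-time ergodic theorem for the path segment chain and then pass to continuous time via a monotone sandwich. By hypothesis (A4) combined with \cite[Theorem 2.1(a)]{HK2} (as recalled in the discussion preceding the proposition), the path segment chain $\eta^{\mathbf{ps}}$ is positive Harris recurrent under $\Q^{(\theta,T)}$ with unique invariant probability measure $m^{(\theta,T)}$. For such a chain on the Polish space $C([0,T];U')$, the Chacon--Ornstein / Birkhoff ergodic theorem applied to $F \in \L^1(m^{(\theta,T)})$ yields
\[
\frac{1}{k}\sum_{j=1}^{k} F\!\left(\eta^{\mathbf{ps}}_j\right) \xrightarrow{k\to\infty} \int_{C([0,T];U')} F(\phi)\, m^{(\theta,T)}(d\phi)
\]
$\Q^{(\theta,T)}$-almost surely, and crucially from any deterministic initial path, not merely in the stationary regime.

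Combining this with the assumed identity $A_{kT}=\sum_{j=1}^{k} F(\eta^{\mathbf{ps}}_j)$ immediately gives $k^{-1}A_{kT} \to \int F\,dm^{(\theta,T)}$, or equivalently $A_{kT}/(kT) \to T^{-1}\int F\, dm^{(\theta,T)}$ $\Q^{(\theta,T)}$-almost surely. To extend this from the grid $\{kT:k\in\N\}$ to arbitrary $t\in[0,\infty)$, I would set $k(t):=\lfloor t/T\rfloor$, so that $k(t)T \le t \le (k(t)+1)T$. Since $A$ is an $(\cG_t)$-increasing process (and $F\ge 0$, which forces $A_{kT}$ to be non-decreasing in $k$), we obtain the sandwich
\[
\frac{k(t)T}{t}\cdot\frac{A_{k(t)T}}{k(t)T} \le \frac{A_t}{t} \le \frac{(k(t)+1)T}{t}\cdot\frac{A_{(k(t)+1)T}}{(k(t)+1)T}
\]
valid for all $t$ with $k(t)\ge 1$. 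As $t\to\infty$ both prefactors tend to $1$, and both normalized quantities on the outer sides tend $\Q^{(\theta,T)}$-a.s.\ to $T^{-1}\int F\, dm^{(\theta,T)}$ by the previous step; the sandwich then yields the claim.

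The main obstacle I expect is the discrete-time input itself: establishing a pointwise $\L^1$-ergodic theorem for the positive Harris chain $\eta^{\mathbf{ps}}$ starting from an arbitrary deterministic initial path, rather than only for its stationary version. This requires the standard but non-trivial machinery of Harris chains (via Nummelin's splitting technique, or the treatment of Meyn and Tweedie) to upgrade the Ces\`aro convergence from stationarity to every initial condition, and is precisely what is packaged in the cited \cite[Theorem 2.1(b)]{HK2}. Once this ingredient is granted, the continuous-time extension is essentially automatic: the monotonicity of $A$ is what spares us from needing any separate integrability control on the increments $A_{(k+1)T}-A_{kT}$ within a single period.
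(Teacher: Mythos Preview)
Your argument is correct and is precisely the standard proof: the paper itself does not spell out a proof but simply cites \cite[Section 2]{HK2}, whose Theorem 2.1(b) packages exactly the two ingredients you identify---the ergodic theorem for the positive Harris recurrent path segment chain from any initial point, followed by the monotone sandwich to pass from the grid $\{kT\}$ to continuous time. Your identification of the main nontrivial input (pointwise ergodic convergence from arbitrary initial conditions via Harris chain machinery) is accurate, and the remainder is indeed routine.
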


\begin{proof}
	See Section 2 of \cite{HK2}.
\end{proof}

Proposition \ref{prop:SLLN} is the key to the following Lemma \ref{lem:LAN} which is a slightly modified multi-dimensional version of Lemmas 2.1 and 2.2 from \cite{HK3}.

\begin{lem}\label{lem:LAN}
	Grant assumptions (A1'), (A3) and (A4). Further assume that the measurable mapping $G\colon U'\to\R^{N\times N}$ has values only in the set of symmetric matrices and is uniformly elliptic. We define the mapping
	\begin{align}\label{eq:Q}
		\begin{split}
			\B^{(\theta,T)}_G\colon \big(\L^2\big([0,1];\R^N\big)\big)^2&\;\to\;\hspace{1.1cm}\R,\\
			(u,v)\hspace{1.1cm}&\;\mapsto\; \int_0^1 u(s)^\top \Big(\mu^{(\theta,T)} Q^{(\theta,T)}_{0,sT}(G^{-1})\Big) v(s)ds,
		\end{split}
	\end{align}
	where
	\[
	\mu^{(\theta,T)} Q^{(\theta,T)}_{0,sT}(G^{-1})=\int_{U'}\mu^{(\theta,T)}(dz)\int_{U'}  Q^{(\theta,T)}_{0,sT}(z,d\tilde z) G^{-1}(\tilde z) \in \R^{N\times N}
	\]
	is understood as a matrix-valued integral. Then the following statements are true.
	\begin{enumerate}
		\item[(i)] $\B^{(\theta,T)}_G$ is a non-negative definite and symmetric bilinear form.
		\item[(ii)] If we consider $u,v \in \L^2\big([0,1];\R^N\big)$ as 1-periodic functions on $[0,\infty)$, then for any $k\in\N_0$ we have
		\begin{equation}\label{eq:lemLANconv}
		\frac{k+1}{t^{k+1}}\int_0^t s^k u(s/T)^\top G^{-1}(\eta_s) v(s/T)ds \xrightarrow{t \to \infty} \B^{(\theta,T)}_G[u,v]
		\end{equation}
		$\Q^{(\theta,T)}$-almost surely.
	\end{enumerate}
\end{lem}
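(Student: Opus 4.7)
For part (i), since $G$ takes values in the symmetric matrices and satisfies the uniform ellipticity bound with constant $\sigma_0>0$, for every $z\in U'$ the inverse $G^{-1}(z)$ is symmetric with eigenvalues in $(0,1/\sigma_0]$. The matrix $M(s):=\mu^{(\theta,T)}Q^{(\theta,T)}_{0,sT}(G^{-1})$ is a probabilistic average of such $G^{-1}(\tilde z)$, so it inherits symmetry and positive semi-definiteness and has operator norm at most $1/\sigma_0$. Since $\B^{(\theta,T)}_G[u,v]=\int_0^1 u(s)^\top M(s)v(s)\,ds$, bilinearity is immediate, symmetry follows from $M(s)^\top=M(s)$, non-negative definiteness from $x^\top M(s)x\ge 0$, and the uniform norm bound on $M(s)$ combined with Cauchy--Schwarz guarantees that $\B^{(\theta,T)}_G$ is well-defined (indeed bounded) on $\big(\L^2([0,1];\R^N)\big)^2$.

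For part (ii), my plan is to first treat the diagonal case $k=0$, $u=v$, then extend to $k\ge 1$ via integration by parts, and finally to general $u,v$ via polarization. In the base case, set $A_t:=\int_0^t u(s/T)^\top G^{-1}(\eta_s)u(s/T)\,ds$, which is non-negative and increasing. Using the $T$-periodicity of $s\mapsto u(s/T)$ and the substitution $s=(j-1)T+r$ on each interval $[(j-1)T,jT]$, one obtains the grid-point representation $A_{kT}=\sum_{j=1}^k F(\eta^{\mathbf{ps}}_j)$ with $F(\phi):=\int_0^T u(r/T)^\top G^{-1}(\phi(r))u(r/T)\,dr$. Uniform ellipticity of $G$ yields the deterministic bound $F(\phi)\le T\sigma_0^{-1}\int_0^1|u(s)|^2\,ds$, so $F\in\L^1(m^{(\theta,T)})$, and Proposition \ref{prop:SLLN} yields $A_t/t\to\tfrac1T\int F\,dm^{(\theta,T)}$ $\Q^{(\theta,T)}$-almost surely. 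Fubini and the identification of the one-dimensional marginal of $\phi(r)$ under $m^{(\theta,T)}$ as $\mu^{(\theta,T)}Q^{(\theta,T)}_{0,r}$ -- extracted from \eqref{eq:segmentinvariantmeasure} with $l=2$, $t_0=0$, $t_1=r$, $t_2=T$ by integrating the final factor out -- together with the substitution $r=sT$ turn the right-hand side into exactly $\B^{(\theta,T)}_G[u,u]$.

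For the upgrade to $k\ge 1$ with $u=v$ still, the pathwise integration-by-parts identity
\[
\int_0^t s^k\,dA_s=t^k A_t-k\int_0^t s^{k-1}A_s\,ds
\]
together with the already established $A_s/s\to c:=\B^{(\theta,T)}_G[u,u]$ and a routine $\varepsilon$-truncation of the residual yields $\int_0^t s^{k-1}A_s\,ds=ct^{k+1}/(k+1)+o(t^{k+1})$ pathwise, so $(k+1)t^{-(k+1)}\int_0^t s^k\,dA_s$ converges $\Q^{(\theta,T)}$-a.s.\ to $(k+1)c-kc=c$. For general $u,v\in\L^2([0,1];\R^N)$ I then apply the polarization identity $u^\top G^{-1}v=\tfrac14\big[(u+v)^\top G^{-1}(u+v)-(u-v)^\top G^{-1}(u-v)\big]$ to both sides of \eqref{eq:lemLANconv}, which by the symmetric bilinearity of $\B^{(\theta,T)}_G$ established in part (i) reduces the general case to the diagonal one. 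I expect the main technical hurdle to lie in the careful identification of the marginal of $\phi(r)$ under $m^{(\theta,T)}$ from \eqref{eq:segmentinvariantmeasure} and in the circumvention -- via polarization -- of the non-negativity restriction on the additive functional required by Proposition \ref{prop:SLLN}.
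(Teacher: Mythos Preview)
Your proposal is correct and follows essentially the same route as the paper: reduce to the diagonal case $u=v$ via polarization, handle $k=0$ by recognizing $A_t$ as an additive functional of the path segment chain and invoking Proposition~\ref{prop:SLLN}, then identify the limit with $\B^{(\theta,T)}_G[u,u]$ through \eqref{eq:segmentinvariantmeasure}, and finally pass to general $k$ by elementary calculus. The only differences are cosmetic: the paper performs the polarization reduction before the extension in $k$ rather than after, and it defers the $k\ge 1$ step to an external reference (Lemma~3.17 of \cite{ICHDiss}) whereas you spell out the integration-by-parts argument explicitly.
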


\begin{proof}
	For the sake of simplicity and as $(\theta,T)$ is fixed anyway, we drop all corresponding superscripts. First, we check that $\B_G$ is indeed a well-defined mapping with values in $\R$. Let the lower bound for the eigenvalues of $G(\cdot)$ be denoted by $G_0 \in (0,\infty)$. Recall that $G^{-1}(\cdot)$ always exists, is positive definite, and $G_0^{-1}$ is an upper bound for its eigenvalues. Then by linearity and contractivity of the operator $\mu Q_{0,sT}$, we can estimate
	\[
	0\le \B_G[u,u] = \int_0^1 \mu Q_{0,sT}\left( u(s)^\top G^{-1}(\cdot)u(s) \right)ds \le G_0^{-1}\int_0^1 \abs{u(s)}^2ds <\infty.
	\]
	Thanks to the symmetry of $G^{-1}$, we can polarize the integrand and thus the whole expression, which allows us to use the above in order to conclude that
	\[
	\abs{\B_G[u,v]}=\frac12 \abs{ \B_G[u,u]+\B_G[v,v]-\B_G[u+v,u+v]} < \infty,
	\]
	and hence $\B_G$ is well-defined. It is then trivial to see that it is a non-negative definite and symmetric bilinear form, and the proof for (i) is complete.
	
	We note that the left hand side of \eqref{eq:lemLANconv} is bilinear in $u$ and $v$ as well. Thanks to this and (i), the proof of the second statement of the Lemma can be reduced to the case $u=v$, since the general case then follows by polarization.
	
	Let us fix $u \in \L^2\big([0,1];\R^N\big)$ and define the process $A:=(A_t)_{t\in[0,\infty)}$ with
	\[
	A_t:=\int_0^t u(s/T)^\top G^{-1}(\eta_s) u(s/T)ds \quad \text{for all $t\in[0,\infty)$.}
	\]
	Since $G^{-1}(\cdot)$ is positive definite, the integrand is non-negative, and therefore $A$ is an increasing process whose trajectories are obviously continuous. Note that the expression on the left hand side of \eqref{eq:lemLANconv} can be rewritten as
	\[
	\frac{k+1}{t^{k+1}}\int_0^t s^k dA_s.
	\]
	For $k=0$ this is simply $\frac{1}{t}A_t$, which we will handle with the help of Proposition \ref{prop:SLLN}. The general statement then follows from this special case by elementary calculus (compare Lemma 3.17 of \cite{ICHDiss}).	
	
	In order to establish the functional relation between $A$ and $\eta$ that is needed in Proposition \ref{prop:SLLN}, we define the function
	\[
	F \colon C\big([0,T];U'\big) \to [0,\infty), \quad \phi \mapsto \int_0^T  u(s/T)^\top G^{-1}(\phi(s)) u(s/T)ds,
	\]
	which is bounded by $T G_0^{-1}\norm{u}_{\L^2([0,1])}$, and thus it is integrable with respect to the probability measure $m$. Since $u$ is 1-periodic, we see that
	\begin{align*}
		\sum_{j=1}^k F\left(\eta^{\mathbf{ps}}_j\right)&=\sum_{j=1}^k \int_0^T u(s/T)^\top G^{-1}(\eta_{(j-1)T+s}) u(s/T)ds
		=\int_0^{kT} u(s/T)^\top G^{-1}(\eta_s) u(s/T)ds 
		=A_{kT}
	\end{align*}
	for all $k \in \N$, and consequently Proposition \ref{prop:SLLN} allows to deduce $\Q$-almost sure convergence
	\begin{align*}
		\lim_{t \to \infty} \frac{1}{t}A_t 
		&= \frac{1}{T} \int_{C([0,T];U')} \int_0^T  u(s/T)^\top G^{-1}(\phi(s)) u(s/T)ds \,  m(d\phi) \\ 
		&= \frac{1}{T} \int_0^T u(s/T)^\top \left( \int_{C([0,T];U')} G^{-1}(\phi(s))m(d\phi)\right) u(s/T)ds \\
		&= \frac{1}{T} \int_0^T u(s/T)^\top \left(\int_{U'} G^{-1}(x) \mu Q_{0,s}(dx)\right) u(s/T)ds \\
		&= \B_G[u,u],
	\end{align*}
	where the use of Fubini's Theorem in the second step is justified by the non-negativity of the integrand, and the third step makes use of \eqref{eq:segmentinvariantmeasure}. This completes the proof.
\end{proof}

Using the notation from \eqref{eq:Q}, for each $(\theta,T)\in\Theta\times(0,\infty)$ and $t\in[0,\infty)$ we define the symmetric $(D+1)\times(D+1)$-dimensional block matrix
\begin{equation}\label{eq:matrix}
\cI_{(\theta,T)}(t):= \begin{pmatrix}
t \left(\B^{(\theta,T)}_{\sigma\sigma^\top}[ \d_{\theta_i}S_\theta,\d_{\theta_j}S_\theta]\right)_{i,j=1,\ldots,D} & -\frac{t^2}{2T^2} \left(\B^{(\theta,T)}_{\sigma\sigma^\top}[\d_{\theta_i}S_\theta,S_\theta']\right)_{i=1,\ldots,D} \\
\cdots & \frac{t^3}{3T^4} \B^{(\theta,T)}_{\sigma\sigma^\top}[S_\theta',S_\theta']
\end{pmatrix}.
\end{equation}
Its derivative with respect to $t$ is given by
\[
\cI_{(\theta,T)}'(t)=\begin{pmatrix}
\left(\B^{(\theta,T)}_{\sigma\sigma^\top}[ \d_{\theta_i}S_\theta,\d_{\theta_j}S_\theta]\right)_{i,j=1,\ldots,D} & -tT^{-2}\left(\B^{(\theta,T)}_{\sigma\sigma^\top}[\d_{\theta_i}S_\theta,S_\theta']\right)_{i=1,\ldots,D} \\
\cdots & t^2T^{-4}\B^{(\theta,T)}_{\sigma\sigma^\top}[S_\theta',S_\theta']
\end{pmatrix}.
\]
We make the following assumption.
\begin{itemize}
	\item[\textbf{(S5')}] \textbf{Regularity of the signal with respect to $\B^{(\theta,T)}_{\sigma\sigma^\top}$:} For all $(\theta,T)\in\Theta\times(0,\infty)$ and $t\in(0,\infty)$ we have
	\[
	\text{(i) $\cI_{(\theta,T)}(t)$ is invertible,} \qquad\quad \text{(ii) $\cI_{(\theta,T)}'(t)$ is invertible.}
	\]
\end{itemize}
While part (ii) of (S5') is merely needed for technical reasons (as will become clear in the proof of Theorem \ref{thm:LAN} below), part (i) is of more general importance, since $\cI_{(\theta,T)}(1)$ will turn out to be the Fisher Information. We will discuss these conditions in detail in the following remark.

\begin{remark}\label{rem:Fischer}
	1.) Note that $\cI'_{(\theta,T)}(t)$ is the Gramian matrix of $\d_{\theta_1}S_\theta,\ldots,\d_{\theta_D}S_\theta,-tT^{-2}S_\theta'$ with respect to the non-negative definite symmetric bilinear form $\B^{(\theta,T)}_{\sigma\sigma^\top}$. Hence, it is non-negative definite. The same is true for $\cI_{(\theta,T)}(t)$, since it is "almost a Gramian matrix". Indeed, setting
	\[
	u_1:=t^{1/2}\d_{\theta_1}S_\theta, \,\ldots,\, u_D:=t^{1/2}\d_{\theta_D}S_\theta, \,u_{D+1}:= -\frac{t^{3/2}}{2T^2}S'_\theta,
	\]
	we can write
	\[
	\renewcommand{\arraystretch}{1.6}
	\cI_{(\theta,T)}(t) =  \begin{pmatrix}
	\B^{(\theta,T)}_{\sigma\sigma^\top}[u_1,u_1] & \cdots & \cdots & \B^{(\theta,T)}_{\sigma\sigma^\top}[u_1,u_{D+1}] \\
	\vdots &\ddots &  & \vdots\\ 
	\vdots & & \B^{(\theta,T)}_{\sigma\sigma^\top}[u_D,u_D] & \B^{(\theta,T)}_{\sigma\sigma^\top}[u_D,u_{D+1}]\\ 
	\B^{(\theta,T)}_{\sigma\sigma^\top}[u_{D+1},u_1] & \cdots & \B^{(\theta,T)}_{\sigma\sigma^\top}[u_{D+1},u_D] & \frac43 \B^{(\theta,T)}_{\sigma\sigma^\top}[u_{D+1},u_{D+1}]\end{pmatrix},
	\renewcommand{\arraystretch}{1}
	\]
	and we see that for all $x\in\R^{D+1}$
	\begin{align*}
	x^\top \cI_{(\theta,T)}(t) x &= \sum_{i,j=1}^{D+1} x_i \B^{(\theta,T)}_{\sigma\sigma^\top}[u_i,u_j] x_j + \frac13 x^2_{D+1} \B^{(\theta,T)}_{\sigma\sigma^\top}[u_{D+1},u_{D+1}] 
	\ge \B^{(\theta,T)}_{\sigma\sigma^\top}\left[\sum_{i=1}^{D+1}x_iu_i,\sum_{j=1}^{D+1}x_ju_j\right]
	\end{align*}
	which is non-negative.
	
	2.) In particular, 1.) implies that $\cI_{(\theta,T)}(t)$ and $\cI'_{(\theta,T)}(t)$ are invertible if and only if they are positive definite.
	
	3.) If $\B^{(\theta,T)}_{\sigma\sigma^\top}$ is positive definite (and hence an inner product), the same reasoning as in 1.) yields that linear independence of $\d_{\theta_1}S_\theta, \ldots, \d_{\theta_D}S_\theta, S'_\theta$ is equivalent to invertibility of $\cI'_{(\theta,T)}(t)$, and sufficient for invertibility of $\cI_{(\theta,T)}(t)$.
	
	4.) If (A2) holds, for all $u\in\L^2\big([0,1];\R^N\big)$ we can use \eqref{eq:ellipticupperbound} and estimate
	\[
	\B^{(\theta,T)}_{\sigma\sigma^\top}[u,u] = \int_0^1 \mu^{(\theta,T)} Q^{(\theta,T)}_{0,sT}\left( u(s)^\top \big(\sigma\sigma^\top\big)^{-1}(\cdot)u(s) \right)ds \ge \sigma_\infty^{-1} \int_0^1 \abs{u(s)}^2ds,
	\]
	i.e.\ $\B^{(\theta,T)}_{\sigma\sigma^\top}$ is positive definite (in fact even coercive). Thus, (A2) and (S5) together imply (S5').
	
	5.) A very simple and seemingly natural sufficient condition for (S5') is orthogonality of the functions $\d_{\theta_1}S_\theta, \ldots, \d_{\theta_D}S_\theta, S'_\theta$ with respect to $\B^{(\theta,T)}_{\sigma\sigma^\top}$ (without assuming this bilinear form to be positive definite). This is equivalent to both $\cI_{(\theta,T)}(t)$ and $\cI_{(\theta,T)}'(t)$ being diagonal matrices with non-vanishing diagonal entries and as such they are invertible. However, this is not a very likely scenario, since $S_\theta$ has $D$ degrees of freedom, determines the $D$ functions $\d_{\theta_1}S_\theta, \ldots, \d_{\theta_D}S_\theta$, and then $S'_\theta$ -- while adding no further degree of freedom -- would have to be orthogonal to these as well.
\end{remark}

\begin{example}\label{ex:signal}
	1.) If the signal is of the form
	\begin{equation*}
	S_\theta=\sum_{i=1}^D \theta_i \phi_i,
	\end{equation*}
	where $\phi_1,\ldots,\phi_D \in \L^2\big([0,\infty);\R^N\big)$ are $1$-periodic and orthonormal with respect to $\B^{(\theta,T)}_{\sigma\sigma^\top}$, we have
	\[
	\cI_{(\theta,T)}(t)=\begin{pmatrix}
	t \cdot 1_{D\times D} & -\frac{t^2}{2T^2}\left(\sum_{j=1}^D\theta_j \B^{(\theta,T)}_{\sigma\sigma^\top}[\phi_i,\phi_j']\right)_{i=1,\ldots,D} \\
	\cdots & \frac{t^3}{3T^4}\sum_{i,j=1}^D\theta_i\theta_j \B^{(\theta,T)}_{\sigma\sigma^\top}[\phi_i',\phi_j']
	\end{pmatrix}
	\]
	which is invertible for all $t\in(0,\infty)$ whenever
	\begin{equation}\label{eq:exsignal1}
	\frac43 \sum_{i,j=1}^D\theta_i\theta_j \B^{(\theta,T)}_{\sigma\sigma^\top}[\phi_i',\phi_j'] \neq \sum_{i=1}^D\left( \sum_{j=1}^D \theta_j \B^{(\theta,T)}_{\sigma\sigma^\top}[\phi_i,\phi_j']\right)^2.
	\end{equation}
	Similarly,
	\[
	\cI_{(\theta,T)}'(t)=\begin{pmatrix}
	1_{D\times D} & -tT^{-2}\left(\sum_{j=1}^D\theta_j \B^{(\theta,T)}_{\sigma\sigma^\top}[\phi_i,\phi_j']\right)_{i=1,\ldots,D} \\
	\cdots & t^2 T^{-4}\sum_{i,j=1}^D\theta_i\theta_j \B^{(\theta,T)}_{\sigma\sigma^\top}[\phi_i',\phi_j']
	\end{pmatrix}
	\]
	is invertible for all $t\in(0,\infty)$ whenever
	\begin{equation}\label{eq:exsignal2}
	\sum_{i,j=1}^D\theta_i\theta_j \B^{(\theta,T)}_{\sigma\sigma^\top}[\phi_i',\phi_j'] \neq \sum_{i=1}^D\left( \sum_{j=1}^D \theta_j \B^{(\theta,T)}_{\sigma\sigma^\top}[\phi_i,\phi_j']\right)^2.
	\end{equation}
	If $\B^{(\theta,T)}_{\sigma\sigma^\top}$ is positive definite, part 3.) of Remark \ref{rem:Fischer} gives the condition
	\begin{equation*}
		S_\theta'=\sum_{i=1}^D\theta_i\phi_i' \neq \sum_{i,j=1}^D \theta_j \B^{(\theta,T)}_{\sigma\sigma^\top}[\phi_i,\phi_j']\phi_i
	\end{equation*}
	for invertibility of both $\cI_{(\theta,T)}(t)$ and $\cI_{(\theta,T)}'(t)$.
	
	2.) For $M=N$ let $\sigma\equiv 1_{N\times N}$, then $\B^{(\theta,T)}_{\sigma\sigma^\top}$ is just the standard $\L^2$-inner product with respect to Lebesgue's measure. If $N=1$, $D=2d$ with $d\in\N$, and the signal has a finite Fourier expansion
	\[
	S_\theta(s)=\sum_{k=1}^d \sqrt{2}\left( \theta_k\sin(2k\pi s) + \theta_{d+k}\cos(2k\pi s) \right) \quad \text{for all $s\in[0,\infty)$,}
	\]
	it is both of the type from the first part of this example and of the type introduced in part 3.)\ of Example \ref{ex:signal0} (so in particular it satisfies (S1) - (S4)). Elementary calculations show that the conditions \eqref{eq:exsignal1} and \eqref{eq:exsignal2} then become
	\[
	\sum_{k=1}^dk(\theta_k^2+\theta_{k+d}^2) \neq \alpha \sum_{k=1}^dk^2\theta_{k+d}^2 \quad \text{for all $\alpha\in\{3,4\}$.}
	\]
	If for example there are no $\cos$-terms involved, i.e.\ $\theta_{d+1}= \ldots=\theta_D=0$, these inequalities are valid for all $(\theta_1,\ldots,\theta_d) \neq 0$.
\end{example}

Having introduced all relevant objects and assumptions, and having illustrated them by examples, we can now give the main result of this section.

\begin{thm}[Local Asymptotic Normality for $\cE_Z$]\label{thm:LAN}
	Grant all of the hypotheses (A1'), (A2'), (A3), (A4), (S1) - (S4) and (S5') and fix $(\theta,T) \in \Theta\times(0,\infty)$. Set
	\[
	\delta_n := \begin{pmatrix}
	n^{-1/2} & 0 & \cdots & 0 \\
	0 & \ddots &\ddots & \vdots \\
	\vdots &\ddots&n^{-1/2}&0 \\
	0 &\cdots&0& n^{-3/2} \\
	\end{pmatrix}
	\in \R^{(D+1)\times(D+1)} \quad \text{for all $n\in\N$,} 
	\]
	and fix any bounded sequence $(h_n)_{n \in \N} \subset \R^{D+1}$. Then $\Q^{(\theta,T)}$-almost surely we have
	\begin{equation}\label{eq:thmLAN1}
	\log\frac{d\Q^{(\theta,T)+\delta_n h_n}|_{\cG_n}}{d\Q^{(\theta,T)}|_{\cG_n}}=h_n^\top \cS^{(\theta,T)}_n-\frac12 h_n^\top \cI_{(\theta,T)}h_n + o_{\Q^{(\theta,T)}}(1), \quad \text{as $n\to\infty$,}
	\end{equation}
	with Fisher Information
	\[
		\cI_{(\theta,T)}=\begin{pmatrix}
		\left(\B^{(\theta,T)}_{\sigma\sigma^\top}[ \d_{\theta_i}S_\theta,\d_{\theta_j}S_\theta]\right)_{i,j=1,\ldots,D} & -\frac12T^{-2} \left(\B^{(\theta,T)}_{\sigma\sigma^\top}[\d_{\theta_i}S_\theta,S_\theta']\right)_{i=1,\ldots,D} \\
		\cdots & \frac13T^{-4} \B^{(\theta,T)}_{\sigma\sigma^\top}[S_\theta',S_\theta']
		\end{pmatrix}.
	\]
	and score
	\begin{equation*}
	\cS^{(\theta,T)}_n=\delta_n \int_0^n \left((\sigma\sigma^\top)^{-1/2}(\eta_s) \dot{S}_{(\theta,T)}(s)\right)^\top d\tilde B^{(\theta,T)}_s \quad \text{for all $n\in\N$} 
	\end{equation*}
	such that weak convergence
	\begin{equation*}
	\cL\left(\cS^{(\theta,T)}_n\middle|\Q^{(\theta,T)}\right) \xrightarrow{n\to\infty} \cN\left(0,\cI_{(\theta,T)}\right)
	\end{equation*}
	holds.
\end{thm}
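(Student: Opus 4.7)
\smallskip

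\noindent\textbf{Proof plan.}
The starting point is the explicit log-likelihood ratio \eqref{eq:loglikelihoodZ}. With $(\tilde\theta,\tilde T)=(\theta,T)+\delta_n h_n$, the plan is to Taylor-expand the signal increment via (S2), writing
\[
S_{(\theta,T)+\delta_n h_n}(s)-S_{(\theta,T)}(s) = \dot S_{(\theta,T)}(s)\,\delta_n h_n + R_n(s),
\]
and then to split the two integrals into a principal part plus a remainder. The principal linear term contributes exactly $h_n^\top\cS_n^{(\theta,T)}$ to the stochastic integral, while the principal quadratic term reads $\tfrac12 h_n^\top(\delta_n A_n\delta_n)h_n$, where
\[
A_n:=\int_0^n \dot S_{(\theta,T)}(s)^\top (\sigma\sigma^\top)^{-1}(\eta_s)\,\dot S_{(\theta,T)}(s)\,ds.
\]
The cross-terms are controlled by Cauchy--Schwarz and Itô's isometry once the remainder pieces are shown to be small.

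\smallskip
\noindent\textbf{Convergence of the quadratic term.}
Using $\partial_{\theta_i}S_{(\theta,T)}(s)=\partial_{\theta_i}S_\theta(s/T)$ and $\partial_T S_{(\theta,T)}(s)=-(s/T^2)S_\theta'(s/T)$, the three blocks of $\delta_n A_n\delta_n$ become, respectively,
\[
\tfrac1n\!\int_0^n\!\!(\cdot)\,ds,\qquad \tfrac{-1}{T^2 n^2}\!\int_0^n\!\! s(\cdot)\,ds,\qquad \tfrac{1}{T^4 n^3}\!\int_0^n\!\! s^2(\cdot)\,ds,
\]
with integrands of the form $u(s/T)^\top(\sigma\sigma^\top)^{-1}(\eta_s)v(s/T)$. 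Lemma \ref{lem:LAN} with $k=0,1,2$ gives $\Q^{(\theta,T)}$-a.s.\ convergence to the three blocks of $\cI_{(\theta,T)}$. For the score, the quadratic variation of $v^\top\cS_n^{(\theta,T)}$ at time $n$ is exactly $v^\top(\delta_n A_n\delta_n)v\to v^\top\cI_{(\theta,T)}v$ a.s., so the martingale CLT (or Dambis--Dubins--Schwarz combined with a deterministic time-change) yields the one-dimensional CLT, and a Cramér--Wold argument promotes this to the asserted multivariate convergence.

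\smallskip
\noindent\textbf{Remainder control -- the main obstacle.}
The delicate step is proving $\int_0^n |(\sigma\sigma^\top)^{-1/2}(\eta_s)R_n(s)|^2\,ds \xrightarrow{\P} 0$; by (A2') and Itô's isometry this then takes care of both the residual drift term and the stochastic part via Markov's inequality. The natural difficulty is that $\L^2_{\mathrm{loc}}$-differentiability (S2) only controls the error on bounded intervals, whereas here the horizon $t=n\to\infty$. I plan to split $R_n$ into a $\theta$-direction piece and a $T$-direction piece. For the $T$-direction, Taylor-expanding $S_\theta(s/(T+n^{-3/2}h_n^T))-S_\theta(s/T)$ to second order and using that $S_\theta\in C^2$ with bounded $S_\theta''$ (by (S1) and periodicity) gives an $\L^2$-bound of order $n^{-1/2}$ on $[0,n]$. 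For the $\theta$-direction the fundamental theorem of calculus produces $n^{-1/2}\int_0^1[D_\theta S_{(\theta+r h_n^\theta/\sqrt n,\,T_n)}-D_\theta S_{(\theta,T)}](s)\,dr$; writing this as the sum of a shift in $\theta$ (controlled by $\L^2_{\mathrm{loc}}$-continuity (S3)) and a shift in $T$ at fixed $\theta$, the latter is precisely where the Hölder hypothesis (S4) enters: it yields a bound of order $n^{\beta-3\alpha/2}$ on $\int_{t_0}^n|\cdot|^2\,ds$, and after dividing by $n$ the condition $\beta<1+3\alpha/2$ makes this vanish. This tight matching between the local scale $n^{-3/2}$ for $T$ and the admissible Hölder exponents is the heart of the argument and the reason (S4) is formulated as it is.

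\smallskip
\noindent\textbf{Assembly.}
Putting the pieces together yields
\[
\log\frac{d\Q^{(\theta,T)+\delta_n h_n}|_{\cG_n}}{d\Q^{(\theta,T)}|_{\cG_n}} = h_n^\top\cS_n^{(\theta,T)} - \tfrac12 h_n^\top(\delta_n A_n\delta_n)h_n + o_{\Q^{(\theta,T)}}(1),
\]
and boundedness of $(h_n)$ together with $\delta_n A_n\delta_n\to\cI_{(\theta,T)}$ a.s.\ converts the deterministic quadratic form to $\tfrac12 h_n^\top \cI_{(\theta,T)} h_n$, completing the LAN expansion. The distributional statement for $\cS_n^{(\theta,T)}$ follows from the CLT argument outlined above.
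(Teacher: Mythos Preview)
Your plan matches the paper's proof closely: the same additive splitting of the log-likelihood, the same use of Lemma~\ref{lem:LAN} with $k=0,1,2$ for the blocks of $\delta_n A_n\delta_n$, and your three remainder pieces ($T$-direction second-order Taylor, $\theta$-shift via the fundamental theorem of calculus, H{\"o}lder-in-$T$ on $D_\theta S$) are exactly the paper's $C_n$, $A_n$, $B_n$. Two remarks are worth making.

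First, when you say the $\theta$-shift piece is ``controlled by $\L^2_{\mathrm{loc}}$-continuity (S3)'' you are glossing over the step that resolves the very difficulty you flag: (S3) is only local, while the horizon is $n$. The paper makes explicit that the integrand $S_{(\theta_n,T_n)}-S_{(\theta,T_n)}-D_\theta S_{(\theta,T_n)}(\theta_n-\theta)$ is $T_n$-\emph{periodic} in $s$, so that $\int_0^n|\cdot|^2\,ds\le (n/T_n+1)\int_0^{T_n}|\cdot|^2\,ds$; the factor $|\theta_n-\theta|^2\sim n^{-1}$ then cancels the growing horizon and the remaining bounded-interval integral is handled by (S2)--(S3). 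You should state this periodicity reduction explicitly.

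Second, your Cram\'er--Wold route to the score CLT is in fact more economical than the paper's. The paper first obtains functional weak convergence of $(M_n)$ via \cite[Corollary VIII.3.24]{Jacod} and then identifies the limit through the representation theorem \cite[Theorem II.7.1]{Ikeda}, writing $M(t)=\int_0^t\sqrt{\cI'(s)}\,dB'_s$; it is precisely this representation step that consumes hypothesis (S5')(ii) on the invertibility of $\cI'_{(\theta,T)}(t)$. Your argument only needs the limiting quadratic variation to be deterministic (so that each one-dimensional projection of the limit is Gaussian), and hence renders (S5')(ii) superfluous.
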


\begin{proof}[Proof of Theorem \ref{thm:LAN}]
	We fix $(\theta,T)\in\Theta\times(0,\infty)$, and in order to reduce notational complexity we drop corresponding indices whenever there is no risk of ambiguity: We write $\Q:=\Q^{(\theta,T)}$, $\tilde B:=\tilde B^{(\theta,T)}$ (see \eqref{eq:BMZ}), $\cS_n:=\cS^{(\theta,T)}_n$, $\cI:=\cI_{(\theta,T)}$, $\cI(t):=\cI_{(\theta,T)}(t)$ for all $t\in[0,\infty)$ (see \eqref{eq:matrix}), and $\B:=\B^{(\theta,T)}_{\sigma\sigma^\top}$ (see \eqref{eq:Q}). Moreover, we set
	\begin{equation}\label{eq:thetan}
	(\theta_n,T_n):=(\theta,T)+\delta_n h_n  \quad \text{for all $n\in\N$.}
	\end{equation}
	We now proceed to give the proof, divided into several steps.
	
	1.) The main idea is to introduce a time step size $t\in(0,\infty)$ into the log-likelihood ratio and then interpret
	\[
	\left(\log\frac{d\Q^{(\theta,T)+\delta_n h_n}|_{\cG_{tn}}}{d\Q^{(\theta,T)}|_{\cG_{tn}}}\right)_{t\in[0,\infty)}, \quad n \in \N,
	\]
	as a sequence of continuous-time stochastic processes. Splitting them into several parts and applying Lemma \ref{lem:LAN} together with tools from continuous-time martingale theory will eventually lead to the desired quadratic expansion. Indeed, adding and subtracting the term $\dot{S}_{(\theta,T)}(s)\delta_n h_n$ to the difference of the signals yields
	\begin{align*}
		\log\frac{d\Q^{(\theta,T)+\delta_n h_n}|_{\cG_{tn}}}{d\Q^{(\theta,T)}|_{\cG_{tn}}}
		&= \int_0^{tn} \big((\sigma\sigma^\top)^{-1/2}(\eta_s)\big(S_{(\theta_n,T_n)}-S_{(\theta,T)}\big)(s)\big)^\top d\tilde B_s \\
		&\quad - \frac{1}{2}\int_0^{tn} \big(S_{(\theta_n,T_n)}-S_{(\theta,T)}\big)^\top(s) \big(\sigma\sigma^\top(z)\big)^{-1} \big(S_{(\theta_n,T_n)}-S_{(\theta,T)}\big)(s)ds \\
		&= h_n^\top \left(\delta_n \int_0^{tn} \big((\sigma\sigma^\top)^{-1/2}(\eta_s) \dot{S}_{(\theta,T)}(s)\big)^\top d\tilde B_s\right) \\
		& \quad - \frac12 h_n^\top \left( \delta_n \int_0^{tn} \dot{S}_{(\theta,T)}(s)^\top \big(\sigma\sigma^\top(\eta_s)\big)^{-1} \dot{S}_{(\theta,T)}(s) ds \, \delta_n \right)h_n  \\
		& \quad + \int_0^{tn}\left((\sigma\sigma^\top)^{-1/2}(\eta_s)\big(S_{(\theta_n,T_n)}-S_{(\theta,T)}-\dot{S}_{(\theta,T)}\delta_n h_n\big)(s)\right)^\top d\tilde B_s \\
		& \quad - \frac12\int_0^{tn} \big(S_{(\theta_n,T_n)}-S_{(\theta,T)}-\dot{S}_{(\theta,T)}\delta_n h_n\big)^\top (s) \big(\sigma\sigma^\top(\eta_s)\big)^{-1} \\ 
		&\hspace{75mm}\big(S_{(\theta_n,T_n)}-S_{(\theta,T)}-\dot{S}_{(\theta,T)}\delta_n h_n\big)(s) ds \\
		& \quad - \int_0^{tn} \big(S_{(\theta_n,T_n)}-S_{(\theta,T)}-\dot{S}_{(\theta,T)}\delta_n h_n\big)^\top (s) \big(\sigma\sigma^\top(\eta_s)\big)^{-1} \big(\dot{S}_{(\theta,T)}\delta_n h_n \big) ds \\
		&=: h_n^\top \cS_n(t)-\frac12 h_n^\top \cI_n(t)h_n+R_n(t)-\frac12 U_n(t)-V_n(t),
	\end{align*}
	and in order to prove the Theorem, we will study convergence in distribution of $\cS_n(t)$ for $n \to \infty$ and show almost sure convergence of $\cI_n(1)$ to $\cI=\cI(1)$. Finally, we show that $R_n(t)$, $U_n(t)$, and $V_n(t)$ converge to zero in probability. 
	
	2.) For any fixed $n\in\N$ the process
	\[
	M_n:=(\cS_n(t))_{t\in[0,\infty)}=\Big(\delta_n \int_0^{tn}\big((\sigma\sigma^\top)^{-1/2}(\eta_s) \dot{S}_{(\theta,T)}(s)\big)^\top d\tilde B_s\Big)_{t\in[0,\infty)}
	\]
	is obviously an $\R^{D+1}$-valued local martingale with respect to $\Q$. In order to determine its weak limit for $n \to \infty$ in the Skorohod space $\cD\big([0,\infty);\R^{D+1}\big)$, we study its quadratic variation process $\langle M_n\rangle:=(\langle M_n\rangle_t)_{t\in[0,\infty)}$ with
	\[
	\langle M_n\rangle_t:=\begin{pmatrix}
	\inpro{M_n^{(1)}}{M_n^{(1)}}_t & \cdots & \inpro{M_n^{(1)}}{M_n^{(D+1)}}_t \\
	\vdots & \ddots & \vdots \\
	\inpro{M_n^{(D+1)}}{M_n^{(1)}}_t & \cdots & \inpro{M_n^{(D+1)}}{M_n^{(D+1)}}_t
	\end{pmatrix}
	\in \R^{(D+1)\times(D+1)}.
	\]
	As follows from basic stochastic calculus, $\langle M_n\rangle$ is equal to $(\cI_n(t))_{t\in[0,\infty)}$. Consequently, for $i,j \in \{1,\ldots,D\}$ we have
	\begin{align*}
		\inpro{M_n^{(i)}}{M_n^{(j)}}_t &= \frac{1}{n} \int_0^{tn} \left(\d_{\theta_i}S_{(\theta,T)}(s)\right)^\top \left(\sigma\sigma^\top(\eta_s)\right)^{-1}  \d_{\theta_j}S_{(\theta,T)}(s)ds \\
		&= t \cdot \frac{1}{tn} \int_0^{tn} \left(\d_{\theta_i}S_\theta(s/T)\right)^\top \left(\sigma\sigma^\top(\eta_s)\right)^{-1}  \d_{\theta_j}S_\theta(s/T)ds,
	\end{align*}
	and due to the periodicity of $S_\theta$ and by part (ii) of Lemma \ref{lem:LAN} with $g=\sigma\sigma^\top$ and $k=0$, this expression converges to
	\[
	t\cdot \B[ \d_{\theta_i}S_\theta,\d_{\theta_j}S_\theta] = \cI_{i,j}(t)
	\]
	$\Q$-almost surely for $n \to \infty$. Since
	\begin{equation*}\label{eq:Tderivative}
	\d_T S_{(\theta,T)}(s)=\d_T S_\theta(s/T)=-sT^{-2}S'_\theta(s/T) \quad \text{for all $s \in(0,\infty)$},
	\end{equation*}
	the same argument with $k=1$ yields
	\begin{align*}
		\inpro{M_n^{(i)}}{M_n^{(D+1)}}_t&=\inpro{M_n^{(D+1)}}{M_n^{(i)}}_t \\
		&= \frac{1}{n^2} \int_0^{tn} \left(\d_{\theta_i}S_{(\theta,T)}(s)\right)^\top \left(\sigma\sigma^\top(\eta_s)\right)^{-1}  \d_T S_{(\theta,T)}(s)ds \\
		&= \frac{-t^2}{2T^2} \cdot \frac{1}{\frac12(tn)^2} \int_0^{tn} s \cdot \left(\d_{\theta_i}S_\theta(s/T)\right)^\top \left(\sigma\sigma^\top(\eta_s)\right)^{-1}  S'_\theta(s/T)ds \\
		& \xrightarrow{n \to \infty}  \frac{-t^2}{2T^2}\cdot  \B[ \d_{\theta_i}S_\theta,S'_\theta] =\cI_{i,{D+1}}(t)= \cI_{{D+1},i}(t)
	\end{align*}
	$\Q$-almost surely, and analogously (with $k=2$)
	\begin{align*}
		\inpro{M_n^{(D+1)}}{M_n^{(D+1)}}_t &= \frac{1}{n^3} \int_0^{tn} \left(\d_T S_{(\theta,T)}(s)\right)^\top \left(\sigma\sigma^\top(\eta_s)\right)^{-1}  \d_T S_{(\theta,T)}(s)ds \\
		&= \frac{t^3}{3T^4} \cdot \frac{1}{\frac13(tn)^3} \int_0^{tn} s^2 \cdot S'_\theta(s/T)^\top \left(\sigma\sigma^\top(\eta_s)\right)^{-1} S'_\theta(s/T) ds \\
		& \xrightarrow{n \to \infty} \frac{t^3}{3T^4} \cdot \B[ S'_\theta,S'_\theta] =\cI_{D+1,D+1}(t)
	\end{align*}
	$\Q$-almost surely. In other words,
	\[
	\langle M_n\rangle_t \xrightarrow{n \to \infty} \cI(t) \quad \text{$\Q$-almost surely for all $t\in[0,\infty)$,}
	\]
	and hence the Martingale Convergence Theorem \cite[Corollary VIII.3.24]{Jacod} implies weak convergence
	\begin{equation}\label{eq:weakconvergence}
	\cL(M_n|\Q) \xrightarrow{n\to\infty} \cL(M|\Q) \quad \text{in $\cD\big([0,\infty);\R^{D+1}\big)$}
	\end{equation}
	to some limit martingale $M=(M(t))_{t\in[0,\infty)}$ with quadratic variation process $\langle M\rangle=(\cI(t))_{t\in [0,\infty)}$.\footnote{To be exact, $M$ is actually defined on some arbitrary probability space, but in order to avoid making things more complicated than necessary, we assume without loss of generality that $M$ is in fact defined on (a standard extension of) the same probability space as the sequence $(M_n)_{n\in\N}$.} As noted in Remark \ref{rem:Fischer}, $\cI'(t)$ is symmetric and non-negative definite, so it possesses a square root $\sqrt{\cI'(t)} \in \R^{(D+1)\times(D+1)}$. By (S5'), $\cI'(t)$ is invertible and hence $\sqrt{\cI'(t)}$ is invertible as well. Thus, the Representation Theorem \cite[Theorem II.7.1]{Ikeda} yields that $M$ can be expressed as
	\[
	M(t)=\int_0^t \sqrt{\cI'(s)} dB'_s \quad \text{for all $t\in[0,\infty)$}
	\]
	with some $(D+1)$-dimensional Brownian Motion $B'$. Together with \eqref{eq:weakconvergence}, this also implies weak convergence
	\[
	\cL(M_n(t)|\Q) \xrightarrow{n\to\infty} \cL(M(t)|\Q)=\cN\left(0,\int_0^t \cI'(s)ds\right)=\cN\left(0,\cI(t)\right)
	\]
	for all $t\in[0,\infty)$. In particular, choosing $t=1$ yields weak convergence of the score
	\[
	\cL(\cS_n|\Q)=\cL(M_n(1)|\Q) \xrightarrow{n\to\infty} \cN(0,\cI(1))=\cN(0,\cI),
	\]	
	which completes this step of the proof.
	
	3.) In the second step, we have shown on the fly that 
	\[
	\cI_n(1) = \langle M_n\rangle_1 \xrightarrow{n\to\infty} \langle M\rangle_1=\cI(1)
	\]
	$\Q$-almost surely.
	
	4.) It remains to show convergence to zero in $\Q$-probability of the remainder terms $R_n(t)$, $U_n(t)$, and $V_n(t)$ introduced at the very beginning of this proof. Therefore, we consider the sequence $(R_n)_{n\in\N}$ of the local $\Q$-martingales
	\[
	(R_n(t))_{t\in[0,\infty)}=\left(\int_0^{tn}\Big((\sigma\sigma^\top)^{-1/2}(\eta_s)\big(S_{(\theta_n,T_n)}-S_{(\theta,T)}-\dot{S}_{(\theta,T)}\delta_n h_n\big)(s)\Big)^\top d\tilde B_s\right)_{t\in[0,\infty)}.
	\]
	Their quadratic variation processes are obviously given by $(U_n(t))_{t\in[0,\infty)}$. Exploiting the uniform ellipticity assumption (A2'), we can estimate the quadratic variation by
	\begin{align}\label{eq:ABC}
		\begin{split}
			\langle R_n \rangle_t &= \int_0^{tn} \big(S_{(\theta_n,T_n)}-S_{(\theta,T)}-\dot{S}_{(\theta,T)}\delta_n h_n\big)^\top (s) \big(\sigma\sigma^\top(\eta_s)\big)^{-1} \big(S_{(\theta_n,T_n)}-S_{(\theta,T)}-\dot{S}_{(\theta,T)}\delta_n h_n\big)(s) ds \\
			&\le \sigma_0^{-1} \int_0^{tn} \abs{S_{(\theta_n,T_n)}-S_{(\theta,T)}-\dot{S}_{(\theta,T)}\delta_n h_n }^2 ds\\
			&=\sigma_0^{-1} \int_0^{tn} \abs{S_{(\theta_n,T_n)}-S_{(\theta,T)}-D_\theta S_{(\theta,T)}(\theta_n-\theta)- \d_T S_{(\theta,T)}(s) (T_n-T)}^2 ds.
		\end{split}
	\end{align}
	Note that this upper bound is entirely deterministic. In order to prove that it in fact converges to zero, we will separate the dependence on the parameters $\theta$ and $T$ in such a way that we can use the periodicity and (S1) - (S4) efficiently. This can be achieved by continuing the inequality \eqref{eq:ABC} with
	\begin{align*}
		\langle R_n \rangle_t &\le 3 \sigma_0^{-1} \bigg( \int_0^{tn} \abs{S_{(\theta_n,T_n)}(s)-S_{(\theta,T_n)}(s)- D_\theta S_{(\theta,T_n)}(s)(\theta_n-\theta)}^2ds \\
		& \hspace{11,3mm} + \int_0^{tn} \abs{\big(D_\theta S_{(\theta,T_n)}-D_\theta S_{(\theta,T)}(s)\big)(\theta_n-\theta)}^2ds \\
		& \hspace{11,3mm} + \int_0^{tn} \abs{S_{(\theta,T_n)}(s)-S_{(\theta,T)}(s)-\d_T S_{(\theta,T)}(s) (T_n-T)}^2ds \bigg) \\
		& =: 3 \sigma_0^{-1} (A_n+B_n+C_n).
	\end{align*}
	We will treat convergence of $A_n$, $B_n$, and $C_n$ step for step. For this purpose, set $H:=\sup_{n\in\N}\abs{h_n}$ and note that due to \eqref{eq:thetan} we have
	\[
	\abs{\theta_n-\theta} \le H n^{-1/2} \quad \text{and} \quad \abs{T_n-T} \le H n^{-3/2}
	\]
	for all $n\in\N$.
	
	Starting with $A_n$, we observe that for sufficiently large $n\in\N$ we have $T_n\in[T/2,2T]$ and thus
	\begin{align*}
		A_n 
		&\le \left(\frac{tn}{T_n}+1\right)\int_0^{T_n} \abs{S_{(\theta_n,T_n)}(s)-S_{(\theta,T_n)}(s)- D_\theta S_{(\theta,T_n)}(s)(\theta_n-\theta)}^2ds \\
		&= \left(\frac{tn}{T_n}+1\right) \abs{\theta_n-\theta}^2 \int_0^{T_n}\!\abs{\frac{S_{(\theta_n,T_n)}(s)-S_{(\theta,T_n)}(s)-D_\theta S_{(\theta,T_n)}(s)(\theta_n-\theta)}{\abs{\theta_n-\theta}} }^2\! ds \\
		&\le \left(\frac{tn}{T/2}+1\right) H^2n^{-1}\int_0^{2T} \!\abs{\frac{S_{(\theta_n,T_n)}(s)-S_{(\theta,T_n)}(s)-D_\theta S_{(\theta,T_n)}(s)(\theta_n-\theta)}{\abs{\theta_n-\theta}} }^2\! ds,
	\end{align*}
	where the factor in front of the integral is obviously convergent. Using the $\L^2$-continuity condition (S3) and a simple application of the mean value theorem (compare Lemma 3.18 of \cite{ICHDiss}), one sees that the integral itself tends to zero.
	
	Next, using the H{\"o}lder condition (S4), we obtain for sufficiently large $n\in\N$ that
	\begin{align*}
		B_n &\le \abs{\theta_n-\theta}^2 \int_0^{tn} \abs{D_\theta S_{(\theta,T_n)}(s)-D_\theta S_{(\theta,T)}(s)}^2ds \\
		&\le H^2n^{-1} \left(\int_0^{t_0} \abs{D_\theta S_{(\theta,T_n)}(s)-D_\theta S_{(\theta,T)}(s)}^2ds + C (tn)^\beta \abs{T_n-T}^\alpha \right) \\
		&\le H^2n^{-1} \int_0^{t_0} \abs{\dot S_{(\theta,T_n)}(s)-\dot S_{(\theta,T)}(s)}^2ds + C H^{2+\alpha} t^\beta n^{\beta-(1+3\alpha/2)}.
	\end{align*}
	The particular conditions on $\alpha$ and $\beta$ from (S4) make the second summand vanish for $n\to\infty$, while the first summand converges to zero because of (S3).
	
	In order to estimate $C_n$, we make explicit use of the $C^2$-property (S1) which is readily translated into the condition that the mapping
	\[
	(0,\infty)\ni T \mapsto S_{(\theta,T)}(s)
	\]
	is twice continuously differentiable for any fixed $s\in(0,\infty)$. Consequently, for every $s \in (0,\infty)$ and any $i\in\{1,\ldots,N\}$ Taylor expansion with the Lagrange form of the remainder provides a $\rho_i=\rho_i(s,\theta,T,T_n,h_n)$ between $T$ and $T_n$ such that for sufficiently large $n \in \N$ we can infer that
	\begin{align*}
		\abs{S_{(\theta,T_n)}(s)-S_{(\theta,T)}(s)-(T_n-T) \d_T S_{(\theta,T)}(s)}^2
		&= \sum_{i=1}^N \left(\frac12(T_n-T)^2 \d_T^2S^{(i)}_{(\theta,T)}(s)_{|_{T=\rho_i}}\right)^2 \\
		&=\frac14 \left(T_n-T\right)^4\sum_{i=1}^N\left(\frac{s^2}{\rho_i^4} \left(S_\theta^{(i)}\right)''(s/\rho_i)+\frac{2s}{\rho_i^3} \left(S_\theta^{(i)}\right)'(s/\rho_i)\right)^2 \\
		&\le \frac14 H^4 n^{-6} 2N \left[\left( s^2\frac{\norm{S_\theta''}_\infty}{(T-n^{-3/2}H)^4}\right)^2 +\left(s\frac{2\norm{S_\theta'}_\infty}{(T-n^{-3/2}H)^3}\right)^2\right] \\
		&\le \cst n^{-6}(s^4+s^2)
	\end{align*}
	for some positive constant not depending on $s$ or $n$. Integrating yields
	\[
	C_n \le \cst n^{-6}\int_0^{tn}(s^4+s^2)ds
	\]
	and hence $C_n$ vanishes for $n \to \infty$.
	
	So far, we have shown that the sequence of random variables $(U_n(t))_{n\in\N}$ not only vanishes in probability under $\Q$ for $n \to \infty$, but is even bounded by a deterministic sequence which goes to zero. Therefore,
	\begin{equation}\label{eq:remaindertozero}
	\E_\Q[ R_n(t)^2] = \E_\Q[\langle R_n\rangle_t] = \E_\Q[U_n(t)] \xrightarrow{n \to \infty} 0,
	\end{equation}
	and in particular, $R_n(t)$ also vanishes in probability under $\Q$ for $n \to \infty$. Finally, the same is true for the last remainder variable $V_n(t)$, as by the Cauchy-Schwarz inequality we get that
	\begin{equation}\label{eq:remaindertozero2}
	\abs{V_n(t)}^2\le U_n(t) h_n^\top \cI_n(t)h_n \le U_n(t) H^2 \abs{\cI_n(t)}\xrightarrow{n \to \infty} 0, 
	\end{equation}
	since $\cI_n(t)$ converges and $U_n(t)$ goes to zero. Taking $t=1$ completes the proof.
\end{proof}

\begin{remark}
	The convergence in probability for $n\to\infty$ of the remainder terms $R_n(t)$, $U_n(t)$, and $V_n(t)$ (which determine the term $o_{\Q^{(\theta,T)}}(1)$ in \eqref{eq:thmLAN1}) is in fact even uniform with respect to $t\in[0,t_0]$ for every $t_0\in(0,\infty)$. For $U_n(t)$ this is clear, since it only increases with $t$. Using the Burkholder-Davis-Gundy inequality, the estimation \eqref{eq:remaindertozero} can be improved to
	\[
	\E_\Q\Big[ \sup_{t\in[0,t_0]} \abs{R_n(t)}^2\Big] \le 4 \E_\Q[\langle R_n\rangle_{t_0}] = 4 \E_\Q[ U_n(t_0)] \xrightarrow{n \to \infty} 0,
	\]
	which also takes care of $R_n(t)$. For $V_n(t)$ we notice that the bound given in \eqref{eq:remaindertozero2} only depends on $t$ via $\cI_n(t)$ and $U_n(t)$ which are both non-decreasing with respect to $t$.
\end{remark}

\begin{remark}
	In the one-dimensional case $M=N=1$, variants of Theorem \ref{thm:LAN} are already known in the literature, where shape and periodicity are treated separately and one of them is assumed to be known. A detailed contextualization is provided in Remark 2.6 and Examples 2.7 and 2.8 of \cite{ICH}.
\end{remark}

\noindent\textbf{Acknowledgements.} The author would like to thank Reinhard H{\"o}pfner for fruitful discussions and helpful remarks and suggestions.

\end{document}